\newtheorem{thm}{Theorem}[section]
\newtheorem{cor}[thm]{Corollary}
\newtheorem{lem}[thm]{Lemma}
\numberwithin{equation}{section}
\renewcommand{\thefootnote}{}
\begin{document}

\begin{center}
{\large\bf $q$-Supercongruences from\\[2mm] Gasper and Rahman's summation
formula
 \footnote{The work is supported by the National Natural Science Foundations of China (Nos. 12071103 and
11661032).}}
\end{center}

\renewcommand{\thefootnote}{$\dagger$}

\vskip 2mm \centerline{Chuanan Wei}
\begin{center}
{School of Biomedical Information and Engineering,\\ Hainan Medical University, Haikou 571199, China
\\Email address: weichuanan78@163.com}
\end{center}


\vskip 0.7cm \noindent{\bf Abstract.} In 2017, He [Proc. Amer. Math.
Soc. 145 (2017), 501--508] established two spuercongruences on
truncated hypergeometric series and further proposed two related
conjectures. Subsequently, Liu [Results Math. 72 (2017), 2057--2066]
extended He's formulas and confirmed the second conjecture. However,
the first conjecture is still open up to now. With the help of the
creative microscoping method and the Chinese remainder theorem for
coprime polynomials, we derive several $q$-supercongruences modulo
the fourth and fifth powers of a cyclotomic polynomial from Gasper
and Rahman's summation formula for basic hypergeometric series. As
conclusions, He's first conjecture is confirmed and a more general
form of He's second conjecture is proved.

\vskip 3mm \noindent {\it Keywords}: $q$-supercongruence; creative
microscoping method; Chinese remainder theorem for coprime
polynomials; Gasper and Rahman's summation formula

 \vskip 0.2cm \noindent{\it AMS
Subject Classifications:} 33D15; 11A07; 11B65

\section{Introduction}
For any complex number $x$ and nonnegative integer $n$, define the
shifted-factorial as
\[(x)_{n}=\Gamma(x+n)/\Gamma(x),\]
where $\Gamma(x)$ is the Gamma function.
 In his second letter to Hardy on February 27, 1913, Ramanujan mentioned the
identity
\begin{equation}\label{ramanujan}
\sum_{k=0}^{\infty}(-1)^k(4k+1)\frac{(1/2)_k^5}{k!^5}=\frac{2}{\Gamma(3/4)^4}.
\end{equation}
Let $p$ be an odd prime throughout the paper and $\mathbb{Z}_p$
stand for the ring of all $p$-adic integers. Define Morita's
$p$-adic Gamma function (cf. \cite[Chapter 7]{Robert}) by
 \[\Gamma_{p}(0)=1\quad \text{and}\quad \Gamma_{p}(n)
=(-1)^n\prod_{\substack{1\leqslant k< n\\
p\nmid k}}k,\quad \text{when}\quad n\in\mathbb{Z}^{+}.\]
Noting $\mathbb{N}$ is a dense subset of $\mathbb{Z}_p$ associated with the $p$-adic norm $|\cdot|_p$, for each
$x\in\mathbb{Z}_p$, the definition of $p$-adic Gamma function can be extended as
 \[\Gamma_{p}(x)
=\lim_{\substack{n\in\mathbb{N}\\
|x-n|_p\to0}}\Gamma_{p}(n).\]

 In 1997, Van Hamme
\cite[(A.2)]{Hamme} conjectured an interesting $p$-adic analogue of
\eqref{ramanujan}:
\begin{equation}\label{hamme-b}
\sum_{k=0}^{(p-1)/2}(-1)^k(4k+1)\frac{(1/2)_k^5}{k!^5}\equiv
\begin{cases} \displaystyle -\frac{p}{\Gamma_p(3/4)^4}  \pmod{p^3}, &\text{if $p\equiv 1\pmod 4$,}\\[10pt]
 0\pmod{p^3}, &\text{if $p\equiv 3\pmod 4$.}
\end{cases}
\end{equation}
 Swisher \cite{Swisher} proved that \eqref{hamme-b} also holds
modulo $p^5$ for $p>5$ and $p\equiv 1\pmod 4$. Liu \cite{Liu-b}
showed that, for $p>3$ and $p\equiv 3\pmod 4$,
\begin{equation*}
\sum_{k=0}^{(p-1)/2}(-1)^k(4k+1)\frac{(1/2)_k^5}{k!^5}\equiv
-\frac{p^3}{16}\Gamma_p(1/4)^4\pmod{p^4}.
\end{equation*}

In 2017, He \cite{He} discovered the two supercongruences:
\begin{align}
&\sum_{k=0}^{(p-1)/2}(6k+1)\frac{(1/2)_k^3(1/4)_k}{k!^44^k}
\notag\\
&\quad\equiv
\begin{cases} \displaystyle (-1)^{\frac{p+3}{4}}p\Gamma_p(1/2)\Gamma_p(1/4)^2  \pmod{p^2}, &\text{if $p\equiv 1\pmod 4$,}\\[3pt]
 0\pmod{p^2}, &\text{if $p\equiv 3\pmod 4$,}
\end{cases}
\label{He-a}
\\[5pt]
&\sum_{k=0}^{p-1}(6k+1)\frac{(1/2)_k^3(1/4)_k^2}{k!^5}
\notag\\
&\quad\equiv
\begin{cases} \displaystyle -p\Gamma_p(1/4)^4  \pmod{p^2}, &\text{if $p\equiv 1\pmod 4$,}\\[3pt]
 0\pmod{p^2}, &\text{if $p\equiv 3\pmod 4$},
\end{cases}
\label{He-b}
\end{align}
and further proposed the following two relevant conjectures:
\begin{align}
&\sum_{k=0}^{p-1}(6k+1)\frac{(1/2)_k^3(1/4)_k}{k!^44^k}
\equiv0\pmod{p^4}\:\: \text{with}\:\: p\equiv 3\pmod 4,
\label{He-c}\\[3pt]
&\sum_{k=0}^{p-1}(6k+1)\frac{(1/2)_k^3(1/4)_k^2}{k!^5}
\equiv0\pmod{p^3}\:\: \text{with}\:\: p\equiv 3\pmod 4.
  \label{He-d}
\end{align}
Subsequently, Liu \cite{Liu-a} proved that \eqref{He-a} and
\eqref{He-b} are true modulo $p^3$ and so verified the truth of \eqref{He-d}. However, the conjecture \eqref{He-c} is still open
up to now.

 For any complex numbers $x$, $q$ and nonnegative integer $n$, define the $q$-shifted factorial
 to be
 \begin{equation*}
(x;q)_{\infty}=\prod_{k=1}^{\infty}(1-xq^k)\quad\text{and}\quad
(x;q)_n=\frac{(x;q)_{\infty}}{(xq^n;q)_{\infty}}.
 \end{equation*}
For simplicity, we also adopt the compact notation
\begin{equation*}
(x_1,x_2,\dots,x_m;q)_{n}=(x_1;q)_{n}(x_2;q)_{n}\cdots(x_m;q)_{n},
 \end{equation*}
where $m\in\mathbb{Z}^{+}$ and $n\in\mathbb{Z}^{+}\cup\{0,\infty\}.$
Following Gasper and Rahman \cite{Gasper}, the basic hypergeometric
series can be defined as
$$
_{r}\phi_{s}\left[\begin{array}{c}
a_1,a_2,\ldots,a_{r}\\
b_1,b_2,\ldots,b_{s}
\end{array};q,\, z
\right] =\sum_{k=0}^{\infty}\frac{(a_1,a_2,\ldots, a_{r};q)_k}
{(q,b_1,b_2,\ldots,b_{s};q)_k}\bigg\{(-1)^kq^{\binom{k}{2}}\bigg\}^{1+s-r}z^k.
$$
Then Gasper and Rahman's summation for basic hypergeometric series
(cf. \cite[Equation (3.8.12)]{Gasper}) can be stated as
\begin{align}
&\sum_{k=0}^{\infty}\frac{1-aq^{3k}}{1-a}\frac{(a,b,q/b;q)_k(d,f,a^2q/df;q^2)_k}{(q^2,aq^2/b,abq;q^2)_k(aq/d,aq/f,df/a;q)_k}q^k
\notag\\[3pt]
  &\quad
+\frac{(aq,f/a,b,q/b;q)_{\infty}(d,aq^2/df,fq^2/d,df^2q/a^2;q^2)_{\infty}}{(a/f,fq/a,aq/d,df/a;q)_{\infty}(aq^2/b,abq,fq/ab,bf/a;q^2)_{\infty}}
\notag\\[3pt]
  &\quad
\times{_3}\phi_{2}\left[\begin{array}{c}
f,bf/a,fq/ab\\[3pt]
fq^2/d,df^2q/a^2
\end{array};\, q^2, q^2 \right]
\notag\\[3pt]
  &\quad
=\frac{(aq,f/a;q)_{\infty}(aq^2/bd,abq/d,bdf/a,dfq/ab;q^2)_{\infty}}{(aq/d,df/a;q)_{\infty}(aq^2/b,abq,bf/a,fq/ab;q^2)_{\infty}}.
\label{GR}
\end{align}

All over the paper, let $[r]$ be the $q$-integer $(1-q^r)/(1-q)$ and
$\Phi_n(q)$ denote the $n$-th cyclotomic polynomial in $q$:
\begin{equation*}
\Phi_n(q)=\prod_{\substack{1\leqslant k\leqslant n\\
\gcd(k,n)=1}}(q-\zeta^k),
\end{equation*}
where $\zeta$ is an $n$-th primitive root of unity. Taking advantage
of the creative microscoping method recently introduced by Guo and
Zudilin \cite{GuoZu-a},  Guo \cite{Guo-a2} and Wang and Yue \cite
{WY} gave a $q$-analogue of \eqref{hamme-b}:  for any positive odd
integer $n$,
\begin{align*}
&\sum_{k=0}^{M}(-1)^k[4k+1]\frac{(q;q^2)_k^4(q^2;q^4)_k}{(q^2;q^2)_k^4(q^4;q^4)_k}q^k
\notag\\[5pt]
&\quad\equiv
\begin{cases} \displaystyle [n]\frac{(q^2;q^4)_{(n-1)/4}^2}{(q^4;q^4)_{(n-1)/4}^2}  \pmod{[n]\Phi_n(q)^2}, &\text{if $n\equiv 1\pmod 4$,}\\[15pt]
 0  \pmod{[n]\Phi_n(q)^2}, &\text{if $n\equiv 3\pmod 4$,}
\end{cases}
\end{align*}
where $M=(n-1)/2$ or $n-1$. Then it is extended to the modulo
$[n]\Phi_n(q)^4$ case by  Wei \cite{Wei-b}. There are more
$q$-analogues of supercongruences in the literature, we refer the
reader to
\cite{Guo-adb,GS,GS20c,GuoZu-b,LW,LP,Tauraso,WY-a,Wei-a,Zu19}.

Inspired by the work just mentioned, we shall establish the
following four theorems.

\begin{thm}\label{thm-a}
Let $n$ be a positive integer subject to $n\equiv 3\pmod 4$.
Then
\begin{align*}
\sum_{k=0}^{n-1}[6k+1]\frac{(q;q^2)_k^3(q;q^4)_k}{(q^2;q^2)_k(q^4;q^4)_k^3}q^{k^2+k}
\equiv0\pmod{[n]\Phi_n(q)^3}.
\end{align*}
\end{thm}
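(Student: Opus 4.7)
The plan is to employ the creative microscoping method of Guo and Zudilin. First, I would introduce an auxiliary parameter $a$ and construct a parametric $q$-sum $S_n(a,q)$ with two key features: (i) $S_n(1,q)$ equals the left-hand side of Theorem~\ref{thm-a}; (ii) $S_n(a,q)$ coincides with the left-hand side of Gasper and Rahman's identity \eqref{GR} after the substitutions $q\mapsto q^2$, $b=q$, $f=q$ and the limit $d\to\infty$, so that the right-hand side of \eqref{GR} furnishes a closed-form evaluation of $S_n(a,q)$. A useful preliminary reduction: for $(n+1)/2\leq k\leq n-1$ the factor $(q;q^2)_k^3$ in the summand acquires three zeros modulo $\Phi_n(q)$, so the tail of the sum is already $\equiv 0\pmod{\Phi_n(q)^3}$ and it suffices to analyse the truncation $0\leq k\leq(n-1)/2$.

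The core argument then proceeds in three steps. Step 1: use \eqref{GR} to produce the closed form of $S_n(a,q)$ as a product of $q$-Pochhammer symbols (together with a $_3\phi_2$ correction). Step 2: show that at $a=q^{n}$ and $a=q^{-n}$ this closed form contributes a factor $\Phi_n(q)^2$; concretely, I expect pairs of the Pochhammer factors in the right-hand side of \eqref{GR} to simultaneously acquire zeros modulo $\Phi_n(q)$ at each of these specializations. Step 3: establish $S_n(a,q)\equiv 0\pmod{\Phi_n(q)}$ for every $a$, either by pairing terms $k\leftrightarrow(n-1)/2-k$ in the truncated summand or by exploiting the $a\leftrightarrow 1/a$ symmetry at $q=\zeta_n$, a primitive $n$-th root of unity.

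Since $1-aq^n$, $a-q^n$ and $\Phi_n(q)^2$ are pairwise coprime in $\mathbb{Z}[a,q]$, the Chinese remainder theorem for coprime polynomials combines the three divisibilities into
\[
S_n(a,q)\equiv 0\pmod{(1-aq^n)(a-q^n)\Phi_n(q)^2}.
\]
Specialising $a=1$ gives divisibility of $S_n(1,q)$ by $(1-q^n)^2\Phi_n(q)^2$; using $(1-q^n)=(1-q)\Phi_n(q)\prod_{d\mid n,\,1<d<n}\Phi_d(q)$ one checks that
\[
(1-q^n)^2\Phi_n(q)^2=(1-q)^2\,\Phi_n(q)^4\prod_{\substack{d\mid n\\1<d<n}}\!\Phi_d(q)^2,
\]
which is divisible by $[n]\Phi_n(q)^3$. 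This completes the proof of Theorem~\ref{thm-a}.

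The principal obstacle will be Step 2: showing that both the product factor in the right-hand side of \eqref{GR} and the $_3\phi_2$ correction contribute $\Phi_n(q)^2$ at $a=q^{\pm n}$. The correction term is particularly delicate, because under the limit $d\to\infty$ several of its prefactors diverge while others vanish. I would address this either by passing to a terminating form of \eqref{GR}---truncating the main sum at $(n-1)/2$ from the outset so that \eqref{GR} becomes a polynomial identity and convergence concerns disappear---or by showing via a $q$-Pfaff--Saalsch\"utz evaluation that the $_3\phi_2$ correction itself vanishes modulo $\Phi_n(q)^2$ at these two critical specializations.
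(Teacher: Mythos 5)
Your toolkit (creative microscoping, the $d\to\infty$ degeneration of \eqref{GR}, a Chinese-remainder combination) is the right one, but the central combination step does not work as you describe. Your three inputs are: the values $S_n(q^{\pm n},q)$ are divisible by $\Phi_n(q)^2$, and $S_n(a,q)\equiv0\pmod{\Phi_n(q)}$ uniformly in $a$. These do \emph{not} yield $S_n(a,q)\equiv0\pmod{(1-aq^n)(a-q^n)\Phi_n(q)^2}$: divisibility of the \emph{values} at $a=q^{\pm n}$ by $\Phi_n(q)^2$ does not produce the polynomial factors $1-aq^n$ and $a-q^n$ (for that you need the values to vanish identically), and your uniform-in-$a$ information is only one power of $\Phi_n(q)$, not two. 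A toy counterexample: $S(a,q)=(1-aq^n)\Phi_n(q)$ satisfies all three hypotheses, yet is not divisible by $(1-aq^n)(a-q^n)\Phi_n(q)^2$, and its value at $a=1$ has $\Phi_n$-valuation $2$, not $4$. Moreover, for the natural one-parameter deformation of this sum the truth is different from what you expect in Step 2: at $a=q^{\pm n}$ the GR evaluation contains a factor of the type $(1;q^4)_{(n+1)/4}=0$, so the value is exactly $0$ (not ``nonzero but divisible by $\Phi_n^2$''). That exact vanishing, combined with the $[n]$-divisibility lemma, gives $S_n(a,q)\equiv0\pmod{[n](1-aq^n)(a-q^n)}$ and hence, at $a=1$, only $[n]\Phi_n(q)^2$ --- one power of $\Phi_n(q)$ short of the theorem. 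A single-parameter scheme of the kind you outline cannot reach $[n]\Phi_n(q)^3$; no refinement of the ${}_3\phi_2$ correction or truncation of \eqref{GR} fixes this counting problem. (A secondary issue: discarding the tail $(n+1)/2\le k\le n-1$ is justified only modulo $\Phi_n(q)^3$, not modulo $[n]\Phi_n(q)^3$.)

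The paper gets the missing power from a \emph{second} parameter. It deforms the summand with both $a$ and $b$, namely $(q,aq,q/a;q^2)_k(q/b^3;q^4)_k\big/\big((q^4,q^4/a,aq^4;q^4)_k(b^3q^2;q^2)_k\big)$ times $q^{k^2+k}b^{3k}$, evaluates this sum in closed form at $a=q^{\pm n}$ and at $b=q^n$ via \eqref{GR-a}, and reconstructs the residue modulo $[n](1-aq^n)(a-q^n)(b-q^n)$ by the Chinese remainder theorem with the explicit cofactors \eqref{Chinese-a}--\eqref{Chinese-b} (Theorem \ref{thm-e}); note this requires the explicit specialized values, not mere divisibilities. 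Letting $b\to1$, the term coming from the $a$-specialization vanishes (its closed form contains $(1/b^3;q^4)_{(n+1)/4}\to(1;q^4)_{(n+1)/4}=0$), the modulus acquires an extra $\Phi_n(q)$ from $b-q^n\to1-q^n$, and the surviving residue is itself $\equiv0$ because its numerator $(q^3,q^5;q^4)_{(3n-1)/4}$ contains $(1-q^n)(1-q^{3n})$ --- this is exactly where $n\equiv3\pmod4$ enters. Hence the one-parameter sum is $\equiv0$ modulo $[n]\Phi_n(q)(1-aq^n)(a-q^n)$, and $a\to1$ finally delivers $[n]\Phi_n(q)^3$. To repair your argument you would need to introduce this second parameter (or some equivalent extra source of one power of $\Phi_n(q)$) and carry out the CRT with explicit residues rather than divisibility statements.
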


Choosing $n=p^r$ and then letting $q\to1$ in the above theorem, we
obtain the supercongruence.

\begin{cor}\label{cor-a}
Let $p$ be an odd prime and $r$ a positive integer satisfying
$p^r\equiv 3\pmod 4$. Then
\begin{align*}
\sum_{k=0}^{p^r-1}(6k+1)\frac{(\frac{1}{2})_k^3(\frac{1}{4})_k}{k!^44^k}
\equiv0\pmod{p^{r+3}}.
\end{align*}
\end{cor}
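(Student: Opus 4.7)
The plan combines Guo and Zudilin's creative microscoping with the Chinese remainder theorem for coprime polynomials. Factor the target modulus as
\begin{align*}
[n]\,\Phi_n(q)^3 \;=\; \Phi_n(q)^4 \cdot \frac{[n]}{\Phi_n(q)},
\end{align*}
where the two factors are coprime in $\mathbb{Z}[q]$ since $[n]/\Phi_n(q)=\prod_{d\mid n,\,d<n}\Phi_d(q)$ shares no root with $\Phi_n(q)$. It therefore suffices to prove the congruence separately modulo $\Phi_n(q)^4$ and modulo $[n]/\Phi_n(q)$, and then assemble them by CRT.

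For the $\Phi_n(q)^4$ part, I would introduce an auxiliary microscope parameter $a$ and work with a one-parameter family along the lines of
\begin{align*}
S_n(a,q) := \sum_{k=0}^{n-1}[6k+1]\,\frac{(aq,\,q/a,\,q;q^2)_k\,(q;q^4)_k}{(aq^2,\,q^2/a,\,q^2;q^2)_k\,(q^4;q^4)_k^3}\,q^{k^2+k},
\end{align*}
which reduces to the theorem's sum at $a=1$. The factors $(aq;q^2)_k$ and $(q/a;q^2)_k$ are visibly divisible by $\Phi_n(q)$ at the microscope specializations $a=q^{\pm n}$ for those indices $k$ in the upper half of the range, giving a first-order vanishing $S_n(q^{\pm n},q)=0$. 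To promote these zeros in $a$ to the required order, I would match $S_n(a,q)$ with a terminating specialization of the left-hand side of Gasper and Rahman's identity \eqref{GR} after the substitution $q\mapsto q^2$, choosing $b,d,f$ (one of them likely taken to infinity to produce the $q^{\binom{k}{2}}$-type weight $q^{k^2+k}$) so that the non-terminating ${}_3\phi_2$ tail on the right of \eqref{GR} collapses — or inherits an additional factor of $(1-aq^n)(a-q^n)$ — at the microscope values. The resulting product-form closed evaluation of $S_n(a,q)$ will then exhibit the divisibility
\begin{align*}
S_n(a,q)\equiv 0\pmod{(1-aq^n)^2(a-q^n)^2},
\end{align*}
and setting $a=1$ yields $S_n(1,q)\equiv 0\pmod{\Phi_n(q)^4}$.

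For the $[n]/\Phi_n(q)$ part, I would argue divisor by divisor: for each $d\mid n$ with $d<n$, a symmetric pairing of summation indices $k\mapsto k'$ matched by the $6k+1$ factor reduces the sum modulo $\Phi_d(q)$ to a telescoping cancellation. The congruence $n\equiv 3\pmod 4$ enters here through the factor $(q;q^4)_k$, whose zeros modulo $\Phi_d(q)$ truncate the effective range of $k$ at exactly the place needed to render the pairing fixed-point-free; summing over all $d<n$ then yields divisibility by $[n]/\Phi_n(q)$, and CRT assembles the two halves into the desired congruence modulo $[n]\Phi_n(q)^3$.

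The main obstacle I anticipate is pinning down the precise choice of $(b,d,f)$ in \eqref{GR}: one must select them to reproduce the summand of $S_n(a,q)$ while simultaneously forcing the ${}_3\phi_2$ tail on the right of \eqref{GR} to vanish with a double zero at $a=q^{\pm n}$, so that combining with the elementary first-order zeros from $(aq;q^2)_k(q/a;q^2)_k$ produces the full fourth-power divisibility. Once the correct specialization is identified, the extraction of the order-two vanishing in $a$ and the term-pairing argument for $[n]/\Phi_n(q)$ follow standard $q$-Pochhammer manipulations already used in the creative microscoping literature.
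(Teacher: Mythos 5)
Your overall plan (a one-parameter microscope aimed at a double zero in $a$, plus a pairing argument for $[n]/\Phi_n(q)$) does not go through at its decisive points, and it is not how the fourth power is actually reached. First, your family $S_n(a,q)$ does not specialize to the sum in question: at $a=1$ your denominator $(aq^2,q^2/a,q^2;q^2)_k$ gives $(q^2;q^2)_k^3$, whereas the summand of Theorem \ref{thm-a} has $(q^2;q^2)_k(q^4;q^4)_k^3$; the correct insertion of $a$ is into the $q^4$-factors, $(q^4,q^4/a,aq^4;q^4)_k$, together with a \emph{second} parameter $b$ in $(q/b^3;q^4)_k/(b^3q^2;q^2)_k$ as in Theorem \ref{thm-e}. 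Second, your justification of $S_n(q^{\pm n},q)=0$ is not valid: at $a=q^{-n}$ the factor $(aq;q^2)_k$ only annihilates the terms with $k\geqslant(n+1)/2$, so nothing vanishes ``visibly''; the vanishing comes from the closed-form evaluation \eqref{GR-a} of the whole truncated sum, and it occurs only because the resulting product contains $(1/b^3;q^4)_{(n+1)/4}$, which is zero at $b=1$ — this is exactly where $n\equiv3\pmod4$ enters (so that $(n+1)/4\in\mathbb{Z}$), not through a truncation of the range by $(q;q^4)_k$. Third, and most seriously, the congruence modulo $(1-aq^n)^2(a-q^n)^2$ that your plan requires is precisely the step you leave open, and \eqref{GR} does not supply it: evaluating at $a=q^{\pm n}$ can only certify simple zeros in $a$, and no choice of $b,d,f$ is exhibited (or known) that forces second-order vanishing. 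The paper never needs such double zeros: it proves the parametric congruence of Theorem \ref{thm-e} modulo $[n](1-aq^n)(a-q^n)(b-q^n)$ by the Chinese remainder theorem for the three coprime moduli $(1-aq^n)(a-q^n)$, $(b-q^n)$ and $[n]$ (Lemma \ref{lemm-a} plus two evaluations from \eqref{GR-a}), and then the extra powers of $\Phi_n(q)$ appear at $b\to1$: the factor $(b-q^n)$ contributes one $\Phi_n(q)$, the surviving right-hand term carries $(1-aq^n)(a-q^n)$ explicitly and its numerator $(q^3,q^5;q^4)_{(3n-1)/4}$ contains $(1-q^n)(1-q^{3n})$, i.e.\ $\Phi_n(q)^2$, and finally $a\to1$ turns $(1-aq^n)(a-q^n)$ into $(1-q^n)^2$, giving $[n]\Phi_n(q)^3$.

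Your treatment of the cofactor $[n]/\Phi_n(q)$ is also only a hope: no fixed-point-free pairing matched by $6k+1$ is constructed, and there is no reason such a telescoping exists. The standard (and the paper's) argument, Lemma \ref{lemm-a}, is different: for each divisor $m\mid n$ with $m>1$ and each primitive $m$-th root of unity $\zeta$, the evaluation \eqref{GR-a} shows that every complete block of $m$ consecutive terms sums to zero at $q=\zeta$, whence the truncated sums are divisible by $\prod_{m\mid n,\,m>1}\Phi_m(q)=[n]$ (note this already includes $\Phi_n(q)$ itself, so the splitting $[n]\Phi_n(q)^3=\Phi_n(q)^4\cdot[n]/\Phi_n(q)$, while legitimate, is not needed). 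Finally, to obtain the stated corollary you must still specialize $n=p^r$ and let $q\to1$, using $\Phi_{p^r}(1)=p$ and $[p^r]_{q=1}=p^r$ so that $[n]\Phi_n(q)^3$ becomes $p^{r+3}$; this step is routine but absent from your write-up.
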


When $r=1$, Corollary \ref{cor-a} becomes  \eqref{He-c}. So He's
first conjecture is confirmed.

\begin{thm}\label{thm-b}
Let $n$ be a positive integer subject to $n\equiv 1\pmod 4$.
Then
\begin{align*}
&\sum_{k=0}^{M}[6k+1]\frac{(q;q^2)_k^3(q;q^4)_k}{(q^2;q^2)_k(q^4;q^4)_k^3}q^{k^2+k}
\\[3pt]&\quad
\equiv[n]q^{(1-n)/4}\frac{(q^2;q^4)_{(n-1)/4}}{(q^4;q^4)_{(n-1)/4}}\bigg\{1-[n]^2\sum_{j=1}^{(n-1)/4}\frac{q^{4j}}{[4j]^2}\bigg\}\pmod{[n]\Phi_n(q)^3}.
\end{align*}
where $M=(n-1)/2$ or $n-1$.
\end{thm}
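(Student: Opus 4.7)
My plan is to apply the creative microscoping strategy of Guo and Zudilin \cite{GuoZu-a}, using Gasper and Rahman's summation \eqref{GR} as the key evaluation tool. Introduce a free parameter $a$ and consider the parametric $q$-series
\[
T_n(a,q):=\sum_{k=0}^{M}[6k+1]\,\frac{(aq,q/a,q;q^2)_k\,(q;q^4)_k}{(q^2;q^2)_k\,(aq^4,q^4/a;q^4)_k\,(q^4;q^4)_k}\,q^{k^2+k},
\]
which collapses at $a=1$ to the LHS of Theorem~\ref{thm-b}, since $(aq,q/a;q^2)_k|_{a=1}=(q;q^2)_k^2$ and $(aq^4,q^4/a;q^4)_k|_{a=1}=(q^4;q^4)_k^2$.

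The first technical task is to produce a closed-form evaluation of $T_n(a,q)$ by specialising \eqref{GR} with $q\mapsto q^2$, $a\mapsto q$ (so that the well-poised factor becomes $[6k+1]$), $b\mapsto aq$ (so that $q^2/b=q/a$), and a judicious choice of $d,f$ aligning the $(q^4)$-Pochhammers of \eqref{GR} with $(q;q^4)_k$ in the numerator and $(aq^4,q^4/a;q^4)_k$ in the denominator of $T_n$. The parameters $d,f$ will be chosen so that the companion $_3\phi_2$-tail in \eqref{GR} either vanishes via a zero in its prefactor or becomes divisible by a sufficient power of $\Phi_n(q)$. Denote the resulting $q$-product by $P(a,q)$.

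The second technical task is the parametric congruence
\[
T_n(a,q)\equiv P(a,q)\pmod{(1-aq^n)(a-q^n)\Phi_n(q)^2}.
\]
By the Chinese Remainder Theorem for coprime polynomials (the three factors are coprime in $\mathbb{Z}[q,a]$ because $q^n$ is a unit modulo $\Phi_n(q)$), this reduces to verifying the identity separately at $a=q^{-n}$ and $a=q^n$ modulo $\Phi_n(q)^2$; at each specialisation the factor $(q;q^4)_k$ vanishes modulo $\Phi_n(q)$ for $k>(n-1)/4$, so only $(n-1)/4+1$ terms survive and can be matched with $P$ directly. Setting $a=1$ yields a congruence modulo $(1-q^n)^2\Phi_n(q)^2=(1-q)^2[n]^2\Phi_n(q)^2$, which is divisible by $[n]\Phi_n(q)^3$ (using $[n]=\Phi_n(q)\prod_{d\mid n,\,1<d<n}\Phi_d(q)$). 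Taylor-expanding $P(a,q)$ around $a=1$, the constant term reproduces $[n]q^{(1-n)/4}(q^2;q^4)_{(n-1)/4}/(q^4;q^4)_{(n-1)/4}$; the linear term vanishes by the $a\leftrightarrow 1/a$ symmetry of the summand; and the quadratic term, computed via the $q$-logarithmic derivatives of $(aq^4;q^4)_k$ and $(q^4/a;q^4)_k$ at $a=1$, contributes precisely $-[n]^2\sum_{j=1}^{(n-1)/4}q^{4j}/[4j]^2$ times the leading factor.

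The main obstacle is strengthening the parametric congruence to modulo $\Phi_n(q)^2$ rather than merely $\Phi_n(q)$, and verifying that the quadratic Taylor coefficient simplifies exactly to $\sum_{j=1}^{(n-1)/4}q^{4j}/[4j]^2$; the latter is a non-trivial $q$-harmonic identity extracted from log-derivatives of the relevant $q$-Pochhammers, and it is where the bulk of the bookkeeping will lie. A secondary difficulty is selecting $d,f$ in \eqref{GR} that simultaneously produce $T_n(a,q)$ on the summation side and neutralise the $_3\phi_2$-tail.
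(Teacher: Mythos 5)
There is a genuine gap at the heart of your plan: the parametric congruence
\[
T_n(a,q)\equiv P(a,q)\pmod{(1-aq^n)(a-q^n)\Phi_n(q)^2}
\]
is not justified by the method you describe, and it is precisely the hard point. Specialising $a=q^{\pm n}$ in \eqref{GR} (in fact in its limiting form \eqref{GR-a}) gives an \emph{exact} evaluation of the truncated sum, and hence a congruence only modulo $(1-aq^n)(a-q^n)$; it cannot contribute the extra factor $\Phi_n(q)^2$ to the modulus, because that factor does not involve $a$ and is invisible to any specialisation of $a$. What a root-of-unity argument (the paper's Lemma \ref{lemm-a}) supplies for generic $a$ is only divisibility by $[n]$, i.e.\ a single power of $\Phi_n(q)$. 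So with one parameter your method tops out at the modulus $[n](1-aq^n)(a-q^n)$, which at $a=1$ yields only $[n]\Phi_n(q)^2$ --- one power short of the theorem --- and, worse, the closed form obtained from the $a=q^{\pm n}$ evaluation (the right-hand side of \eqref{eq:wei-j} with $b=1$) is independent of $a$, so your proposed Taylor expansion of $P(a,q)$ at $a=1$ has nothing to expand and cannot produce the correction term $-[n]^2\sum_{j\ge1}q^{4j}/[4j]^2$. Your stated ``reduction via CRT to verification at $a=q^{\pm n}$ modulo $\Phi_n(q)^2$'' conflates the two coprime ingredients: CRT lets you multiply moduli once each divisibility is separately proved, but you never prove divisibility of $T_n(a,q)-P(a,q)$ by $\Phi_n(q)^2$ for generic $a$, and you acknowledge this yourself as the ``main obstacle'' without supplying a mechanism.

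The missing idea, which is how the paper proceeds, is a \emph{second} parameter $b$: one proves the two-parameter congruence of Theorem \ref{thm-f} modulo $[n](1-aq^n)(a-q^n)(b-q^n)$ by combining, via the Chinese remainder theorem with the weights \eqref{Chinese-a}--\eqref{Chinese-b}, the evaluation at $a=q^{\pm n}$ (which gives \eqref{eq:wei-j}, a $b$-dependent product) and the evaluation at $b=q^n$ (which gives \eqref{eq:wei-k}, an $a$-dependent product with $(aq^4,q^4/a;q^4)_{(n-1)/4}$ in the denominator). Letting $b\to1$ converts the factor $b-q^n$ into $1-q^n$, supplying the extra power of $\Phi_n(q)$, and then letting $a\to1$ and applying L'H\^opital to the difference of the two products divided by $(1-a)^2$ (as in \eqref{eq:wei-l}) is what produces the $q$-harmonic sum $\sum_{j=1}^{(n-1)/4}q^{4j}/[4j]^2$. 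Your intuition that the harmonic sum comes from $q$-logarithmic derivatives of $(aq^4;q^4)_k$ and $(q^4/a;q^4)_k$ at $a=1$ is correct in spirit, but without the $b$-parameter and the CRT-weighted pair of closed forms there is no rigorous congruence in which that limit computation lives.
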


Fixing $n=p^r$ and then letting $q\to1$ in the upper theorem, we get
the conclusion.

\begin{cor}\label{cor-b}
Let $p$ be an odd prime and $r$ a positive integer satisfying
$p^r\equiv 1\pmod 4$. Then
\begin{align*}
\sum_{k=0}^{(p^r-1)/2}(6k+1)\frac{(\frac{1}{2})_k^3(\frac{1}{4})_k}{k!^44^k}
\equiv
\frac{p^r}{16}\frac{(\frac{1}{2})_{(p^r-1)/4}}{(1)_{(p^r-1)/4}}\Big\{16-p^{2r}H_{(p^r-1)/4}^{(2)}\Big\}\pmod{p^{r+3}},
\end{align*}
where the harmonic numbers of order 2 are defined by
\[H_{m}^{(2)}
  =\sum_{k=1}^m\frac{1}{k^{2}}\quad\text{with}\quad m\in \mathbb{Z}^{+}.\]
\end{cor}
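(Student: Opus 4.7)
The plan is to deduce this corollary from Theorem \ref{thm-b} by specializing $n=p^r$ and passing to the limit $q\to 1$. Under the hypothesis $p^r\equiv 1\pmod 4$, Theorem \ref{thm-b} applies and we take $M=(p^r-1)/2$. Since $[p^r]|_{q=1}=p^r$ and $\Phi_{p^r}(1)=p$, the polynomial modulus $[n]\Phi_n(q)^3$ specializes to the integer $p^{r+3}$, which is exactly the modulus claimed in the corollary.

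The first task is to verify that the polynomial congruence descends to a $p$-adic integer congruence at $q=1$. Writing $\mathrm{LHS}(q)-\mathrm{RHS}(q)=[n]\Phi_n(q)^3\,C(q)$, one needs $C(q)$ to be regular at $q=1$ (equivalently, to lie in $\mathbb{Z}_p[q]$ after clearing the rational factors on both sides). The only potential obstructions are the denominators $(q^2;q^2)_k$ and $(q^4;q^4)_k^3$ in the summands for $0\le k\le (p^r-1)/2$, and $(q^4;q^4)_{(n-1)/4}$ in the closed form on the right. In each case the relevant exponents $2j,4j$ satisfy $p^r\nmid 2j,\,4j$ (using $p$ odd and $j<p^r$), so no factor of $\Phi_{p^r}(q)$ arises; these denominators are units modulo $\Phi_{p^r}(q)$ and the descent is legitimate. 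This is the only subtle step, and it is entirely standard within the creative-microscoping framework.

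The second task is the limit computation, carried out term by term via $\lim_{q\to 1}(q^a;q^b)_k/(1-q)^k=\prod_{j=0}^{k-1}(a+bj)$. On the left the numerator factors $(q;q^2)_k^3(q;q^4)_k$ contribute $(2^k(1/2)_k)^3\cdot 4^k(1/4)_k$ while $(q^2;q^2)_k(q^4;q^4)_k^3$ contribute $2^k k!\cdot(4^k k!)^3$; the $(1-q)$ powers cancel and the powers of $2$ collapse to give summand $(1/2)_k^3(1/4)_k/(k!^4 4^k)$, and together with $[6k+1]\to 6k+1$ and $q^{k^2+k}\to 1$ this recovers the left-hand side of the corollary. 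On the right, $[n]\to p^r$, $q^{(1-n)/4}\to 1$, the prefactor $(q^2;q^4)_{(n-1)/4}/(q^4;q^4)_{(n-1)/4}$ tends to $(1/2)_{(p^r-1)/4}/(1)_{(p^r-1)/4}$, and the inner sum $\sum_{j=1}^{(n-1)/4}q^{4j}/[4j]^2$ tends to $\tfrac{1}{16}H^{(2)}_{(p^r-1)/4}$. Combined with $[n]^2\to p^{2r}$ and pulling out a factor $1/16$, the bracket becomes $16-p^{2r}H^{(2)}_{(p^r-1)/4}$, yielding exactly the right-hand side of the corollary. No obstacle of any depth remains beyond the bookkeeping of the previous paragraph.
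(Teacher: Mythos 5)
Your proposal is correct and is essentially the paper's own argument: the paper obtains Corollary \ref{cor-b} precisely by setting $n=p^r$ and letting $q\to1$ in Theorem \ref{thm-b} (with $M=(p^r-1)/2$), and your term-by-term limit computations, including $[n]\Phi_n(q)^3\mapsto p^{r+3}$ and $\sum_j q^{4j}/[4j]^2\to\tfrac1{16}H^{(2)}_{(p^r-1)/4}$, match what that specialization gives. One small caution, which the paper itself glosses over entirely: your justification of the $q\to1$ descent only checks that denominators are units modulo $\Phi_{p^r}(q)$, whereas for $r\ge2$ one must also account for lower cyclotomic factors $\Phi_{p^s}(q)$ with $1\le s<r$ that can occur in denominators and likewise specialize to $p$ at $q=1$.
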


\begin{thm}\label{thm-c}
Let $n$ be a positive integer subject to $n\equiv 3\pmod 4$.
Then
\begin{align*}
&\sum_{k=0}^{n-1}[6k+1]\frac{(q;q^2)_k^2(q^2;q^4)_k(q;q^4)_k^2}{(q^2;q^2)_k^2(q^4;q^4)_k^3}q^{2k}
\notag\\[3pt]
&\quad\equiv[3n]\frac{q^{2n}(2-q^n)}{(1+q^n)^2}\frac{(q^2;q^4)_{(3n-1)/4}^2}{(q^4;q^4)_{(3n-1)/4}^2}\pmod{[n]\Phi_n(q)^4}.
\end{align*}
\end{thm}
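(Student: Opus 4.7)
The plan is to apply the creative microscoping method together with Gasper and Rahman's summation \eqref{GR}, and to assemble the target modulus $[n]\Phi_n(q)^4=\Phi_n(q)^5\cdot([n]/\Phi_n(q))$ via the Chinese remainder theorem for coprime polynomials in $\mathbb{Z}[q,q^{-1}][a]$. Replacing $q$ by $q^2$ in \eqref{GR} and specializing its four parameters appropriately (for example, three of them equal to $q$ and the fourth equal to $aq$, where $a$ is a new free parameter), I obtain a parametric series
\begin{align*}
F_n(a,q)=\sum_{k=0}^{n-1}[6k+1]\frac{(aq;q^2)_k(q/a;q^2)_k(q^2;q^4)_k(q;q^4)_k^2}{(aq^4;q^4)_k(q^4/a;q^4)_k(q^4;q^4)_k(q^2;q^2)_k^2}q^{2k}
\end{align*}
with $F_n(1,q)$ equal to the left-hand side of Theorem \ref{thm-c}, together with a parametric right-hand side $R_n(a,q)$ designed so that $R_n(1,q)$ matches the right-hand side of the theorem.

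The heart of the proof is to establish the two congruences
\begin{align*}
F_n(a,q)&\equiv R_n(a,q)\pmod{(1-aq^n)(a-q^n)},\\
F_n(a,q)&\equiv R_n(a,q)\pmod{\Phi_n(q)^3},
\end{align*}
and then combine them. For the first, evaluate $F_n(a,q)$ at $a=q^{\pm n}$: the factor $(q/a;q^2)_k$ (respectively $(aq;q^2)_k$) forces the Gasper--Rahman sum to terminate at $k\le(n-1)/2$, so \eqref{GR} applies directly and, after a careful limiting treatment of the (possibly degenerate) tail prefactor involving $(f/a;q^2)_\infty/(a/f;q^2)_\infty\to-1$, yields closed forms that match $R_n(q^{\pm n},q)$. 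For the second, exploit that for $k\ge(n+1)/2$ the numerator factor $(q^2;q^4)_k$ supplies one $\Phi_n(q)$, while the product $(aq;q^2)_k(q/a;q^2)_k$ supplies the factor $(1-aq^n)(a-q^n)/a$ (which at $a=1$ contributes two further copies of $\Phi_n(q)$ via $(1-q^n)^2$); the denominator symbols carry no $\Phi_n(q)$ factor for $k\le n-1$. For the lower-half terms $k\le(n-1)/2$, the matching is done against the analytic expression for $R_n(a,q)$ coming from the Gasper--Rahman product side.

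Since $\Phi_n(q)^3$ (a nonzero constant in $a$, regarded modulo $\Phi_n(q)$) and $(1-aq^n)(a-q^n)$ (quadratic in $a$ with roots $q^{\pm n}$, not roots of $\Phi_n(q)$) are coprime in $\mathbb{Z}[q,q^{-1}][a]$, the Chinese remainder theorem yields $F_n(a,q)\equiv R_n(a,q)\pmod{\Phi_n(q)^3(1-aq^n)(a-q^n)}$. Setting $a=1$, the right factor becomes $(1-q^n)^2=(1-q)^2\prod_{d\mid n,\,d>1}\Phi_d(q)^2$, so the effective modulus is divisible by $\Phi_n(q)^5\cdot[n]/\Phi_n(q)=[n]\Phi_n(q)^4$, as required. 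The main obstacle I anticipate is the $\Phi_n(q)^3$ congruence: pinning down the precise parametric form of $R_n(a,q)$ (especially the residue from the degenerate factor in \eqref{GR}) and carefully matching the lower-half terms demands delicate Pochhammer-factor bookkeeping, in the spirit of the arguments of Wei~\cite{Wei-b} and Guo~\cite{Guo-a2}.
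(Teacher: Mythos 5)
There is a genuine gap at the heart of your plan, namely the claimed congruence $F_n(a,q)\equiv R_n(a,q)\pmod{\Phi_n(q)^3}$ with $a$ a free parameter. Your term-by-term argument delivers only one power of $\Phi_n(q)$ for generic $a$: for $k\ge(n+1)/2$ the factor $(q^2;q^4)_k$ indeed contains $1-q^{2n}$, but $(aq;q^2)_k(q/a;q^2)_k$ contains $(1-aq^n)(1-q^n/a)$, which is \emph{not} divisible by $\Phi_n(q)$ while $a$ is an indeterminate; appealing to ``two further copies at $a=1$'' is circular, since the whole point of keeping $a$ free is that this congruence must hold identically in $a$ \emph{before} the Chinese remainder step, and $a=1$ may only be set afterwards. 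Likewise, ``matching the lower-half terms against the Gasper--Rahman product side'' is not available: in this setting \eqref{GR} is usable only when a parameter is specialized (to $q^{\pm n}$, or $d\to q^{2-2n}$ as in the proof of Lemma \ref{lemm-b}) so that the series terminates or the closed form vanishes; it yields nothing modulo $\Phi_n(q)^2$ or $\Phi_n(q)^3$ at generic $a$. Without this second congruence your CRT assembly and the count $\Phi_n(q)^3\cdot(1-q^n)^2\supseteq[n]\Phi_n(q)^4$ never get off the ground.

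The missing idea is exactly what the paper supplies: a \emph{second} parameter. In Theorem \ref{thm-g} the sum carries both $a$ (in the $(aq,q/a;q^2)_k$ slots) and $b^3$ (in the $(b^3q,q/b^3;q^4)_k$ slots); \eqref{GR} is applied twice, at $a=q^{\pm n}$ and at $b=q^{\pm n}$, giving \eqref{eq:wei-p} modulo $(1-aq^n)(a-q^n)$ and \eqref{eq:wei-r} modulo $(1-bq^n)(b-q^n)$, and these are glued with Lemma \ref{lemm-b} via the Chinese remainder theorem for coprime polynomials. Letting $b\to1$ then converts $(1-bq^n)(b-q^n)$ into precisely the extra $\Phi_n(q)^2$ you were trying to produce at generic $a$, giving \eqref{eq:wei-s} modulo $[n]\Phi_n(q)^2(1-aq^n)(a-q^n)$. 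A further step you also do not address is needed before letting $a\to1$: the right-hand side of \eqref{eq:wei-s} has $(1-a)^2$ in its denominator, and it is the factorization \eqref{eq:wei-t} (extracting $(1-aq^{2n})(1-q^{2n}/a)\equiv q^{2n}(1-a)(1-1/a)\pmod{\Phi_n(q)^2}$ from $(aq^2,q^2/a;q^4)_{(3n-1)/4}$) that cancels this denominator and makes the limit $a\to1$ produce the stated right-hand side. If you insist on a one-parameter scheme, you would need an independent proof of the generic-$a$ congruence modulo $\Phi_n(q)^3$; as written, that step fails.
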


Setting $n=p^r$ and then letting $q\to1$ in Theorem \ref{thm-c}, we
arrive at the result.

\begin{cor}\label{cor-c}
Let $p$ be an odd prime and $r$ a positive integer satisfying
$p^r\equiv 3\pmod 4$. Then
\begin{align*}
\sum_{k=0}^{p^r-1}(6k+1)\frac{(\frac{1}{2})_k^3(\frac{1}{4})_k^2}{k!^5}
\equiv\frac{3p^r}{4}\frac{(\frac{1}{2})_{(3p^r-1)/4}^2}{(1)_{(3p^r-1)/4}^2}\pmod{p^{r+4}}.
\end{align*}
\end{cor}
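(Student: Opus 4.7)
My plan is to deduce Corollary~\ref{cor-c} as a direct specialisation of Theorem~\ref{thm-c}: set $n=p^r$ and then let $q\to 1$. The hypothesis $p^r\equiv 3\pmod 4$ is exactly what is needed to invoke the theorem, and it also guarantees $(3p^r-1)/4\in\mathbb{Z}^{+}$ so that the right-hand side is well defined.

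The modulus is the cleanest piece of the reduction. At $q=1$ one has $[p^r]\to p^r$ and the classical evaluation $\Phi_{p^r}(1)=p$, so the $q$-modulus $[n]\Phi_n(q)^4$ specialises to $p^r\cdot p^4 = p^{r+4}$, precisely the modulus appearing in the corollary.

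For the two sides I would apply the standard asymptotic $(q^a;q^b)_k\sim(1-q)^k b^k (a/b)_k$ as $q\to 1$ and then count powers of $(1-q)$ term-by-term. On the left, the numerator $(q;q^2)_k^2(q^2;q^4)_k(q;q^4)_k^2$ contributes $2k+k+2k=5k$ powers of $(1-q)$ while the denominator $(q^2;q^2)_k^2(q^4;q^4)_k^3$ contributes $2k+3k=5k$ powers; the cancellation is exact, the numerical prefactors collapse as $4^k\cdot 4^k\cdot 16^k = 256^k = 4^k\cdot 64^k$, the factor $[6k+1]$ tends to $6k+1$, and $q^{2k}\to 1$, so the summand limits to $(6k+1)(\tfrac12)_k^3(\tfrac14)_k^2/k!^5$. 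On the right, $[3n]\to 3p^r$, the rational factor $q^{2n}(2-q^n)/(1+q^n)^2$ limits to $1\cdot 1/4=1/4$, and the $q$-Pochhammer ratio limits to $(\tfrac12)_{(3p^r-1)/4}^2/(1)_{(3p^r-1)/4}^2$; combining yields the prefactor $3p^r/4$ exactly as claimed.

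No significant obstruction is expected: all nontrivial content is already carried by Theorem~\ref{thm-c}, and the passage from a $q$-supercongruence to a classical supercongruence is just the bookkeeping sketched above. The only point I would verify with care is that the $(1-q)$-powers on the left balance exactly, so that no stray factor of $[n]$ or $\Phi_n(q)$ appears from a mismatch that would change the modulus; the count $5k=5k$ above confirms this.
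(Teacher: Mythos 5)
Your proposal is correct and is exactly the paper's route: Corollary \ref{cor-c} is obtained by setting $n=p^r$ in Theorem \ref{thm-c} and letting $q\to1$, with the modulus $[n]\Phi_n(q)^4$ specialising to $p^r\cdot p^4=p^{r+4}$ and the summand and right-hand side limits computed just as you describe. The $(1-q)$-power count and the $256^k$ cancellation check out, so no further detail is needed.
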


It is easy to understand that the $r=1$ case of Corollary
\ref{cor-c} is an extension of He's second conjecture \eqref{He-d}.

\begin{thm}\label{thm-d}
Let $n$ be a positive integer subject to $n\equiv 1\pmod 4$. Then
\begin{align*}
&\sum_{k=0}^{M}[6k+1]\frac{(q;q^2)_k^2(q^2;q^4)_k(q;q^4)_k^2}{(q^2;q^2)_k^2(q^4;q^4)_k^3}q^{2k}
\notag\\[3pt]
&\quad\equiv
[n]\frac{(q^2;q^4)_{(n-1)/4}^2}{(q^4;q^4)_{(n-1)/4}^2}\bigg\{1+[n]^2(2-q^n)\sum_{j=1}^{(n-1)/2}\frac{(-1)^{j+1}q^{2j}}{[2j]^2}\bigg\}\pmod{[n]\Phi_n(q)^4}.
\end{align*}
where $M=(n-1)/2$ or $n-1$.
\end{thm}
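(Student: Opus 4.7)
My plan is to adapt the creative microscoping scheme used in the proofs of Theorems~\ref{thm-a}--\ref{thm-c} to the $n\equiv 1\pmod 4$ regime, in which the sum has a nontrivial limit value rather than vanishing. Following Guo and Zudilin's template, I would introduce a one-parameter $q$-series $T_n(a,q)$, of the schematic form
\[
T_n(a,q)=\sum_{k=0}^{M}[6k+1]\,\frac{(aq,q/a;q^2)_k(q^2;q^4)_k(q;q^4)_k^2}{(aq^2,q^2/a;q^2)_k(q^4;q^4)_k^3}q^{2k},
\]
obtained by splitting the doubled factors of the original summand via $(q;q^2)_k^2\rightsquigarrow(aq,q/a;q^2)_k$ and $(q^2;q^2)_k^2\rightsquigarrow(aq^2,q^2/a;q^2)_k$. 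This reduces to the left-hand side of Theorem~\ref{thm-d} at $a=1$, is symmetric under $a\leftrightarrow 1/a$, and terminates at $k\le(n-1)/2$ when $a=q^{\pm n}$. A judicious choice of the parameters $(b,d,f)$ in Gasper and Rahman's summation~\eqref{GR}, arranged so that one of $d,f$ is a power of $q^{-n}$ which annihilates the remainder $_3\phi_2$, then evaluates $T_n(q^n,q)$ and $T_n(q^{-n},q)$ in closed form.

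Next, by the Chinese remainder theorem for coprime polynomials in $\mathbb{Z}[q][a]$, the two evaluations assemble into a congruence for $T_n(a,q)$ modulo $(1-aq^n)(a-q^n)$, which at $a=1$ translates to divisibility by $\Phi_n(q)^2$. To reach the modulus $[n]\Phi_n(q)^4$ advertised in Theorem~\ref{thm-d}, a strengthening is required: either one tracks $T_n(a,q)$ together with its $a$-derivative (lifting the CRT modulus to $(1-aq^n)^2(a-q^n)^2$), or one introduces a second microscoping parameter $b$ and exploits the four specializations $a,b\in\{q^n,q^{-n}\}$. A separate divisibility by $[n]$, obtained by the antisymmetric pairing $k\leftrightarrow(n-1)/2-k$ in the original sum and the observation that conjugate pairs collapse into $(1-q^n)$-multiples, is then combined with the above by a final CRT step to promote the modulus to $[n]\Phi_n(q)^4$.

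Finally, I would let $a\to 1$ by Taylor-expanding the Gasper--Rahman closed form about $a=1$. The constant term recovers the prefactor $[n](q^2;q^4)_{(n-1)/4}^2/(q^4;q^4)_{(n-1)/4}^2$; the linear term vanishes automatically by the $a\leftrightarrow 1/a$ symmetry of the deformation; and the quadratic term should produce the correction $[n]^2(2-q^n)\sum_{j=1}^{(n-1)/2}(-1)^{j+1}q^{2j}/[2j]^2$. The main obstacle will be this last identification: recognising the specific alternating $q$-harmonic series $\sum_{j=1}^{(n-1)/2}(-1)^{j+1}q^{2j}/[2j]^2$ as the $a$-logarithmic second derivative at $a=1$ of the Gasper--Rahman closed form modulo $\Phi_n(q)$, and pinning down the origin of the coefficient $(2-q^n)$ through the competing derivatives of the numerator and denominator Pochhammer symbols at the cyclotomic point $q^n=1$. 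The parity of the sign $(-1)^{j+1}$ suggests that this term arises specifically from the $(q^2;q^4)$-type factors in~\eqref{GR}, which would guide the choice of $(b,d,f)$ in Step~1.
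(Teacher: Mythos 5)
Your overall strategy (creative microscoping from Gasper--Rahman plus the Chinese remainder theorem) is the right family of ideas, but the concrete plan has gaps that would stop the proof. First, the one-parameter deformation you write down is not compatible with \eqref{GR}: after the substitutions $q\to q^2$, $a\to q$ that produce the factor $[6k+1]$, the remaining slots of \eqref{GR} force any parameter inserted through $(aq,q/a;q^2)_k$ (the slot $b\to aq$) to reappear in the $q^4$-step denominators as $(q^4/a,aq^4;q^4)_k$, not in $(aq^2,q^2/a;q^2)_k$ with $(q^4;q^4)_k^3$ left untouched; with your placement the series at $a=q^{\pm n}$ terminates but is no longer of Gasper--Rahman type, so the promised closed-form evaluations do not follow. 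The deformation that actually works is the two-parameter one
\begin{align*}
\sum_{k=0}^{M}[6k+1]\frac{(aq,q/a;q^2)_k(q^2,bq,q/b;q^4)_k}{(q^4,q^4/a,aq^4;q^4)_k(q^2/b,bq^2;q^2)_k}q^{2k},
\end{align*}
where $a$ occupies the slot $b\to aq$ and the second parameter enters through $d\to bq$, $f\to q^2$; then the specializations $a=q^{\pm n}$ and $b=q^{\pm n}$ are genuine instances of \eqref{GR}, giving congruences modulo $(1-aq^n)(a-q^n)$ and $(1-bq^n)(b-q^n)$ separately. Your first alternative for reaching the higher power --- upgrading a single-parameter congruence to modulus $(1-aq^n)^2(a-q^n)^2$ by ``tracking the $a$-derivative'' --- is not available in this framework: the congruence modulo $(1-aq^n)(a-q^n)$ comes only from evaluation at $a=q^{\pm n}$, and nothing forces the derivatives of the two sides to agree there. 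Your second alternative (a genuine second parameter) is the correct route, but it is exactly where the nontrivial choice lies, and your sketch leaves it unspecified.

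Second, the assembly of the modulus $[n]\Phi_n(q)^4$ is mis-stated. Since $\Phi_n(q)\mid[n]$, you cannot combine ``divisibility by $[n]$'' with a congruence modulo a power of $\Phi_n(q)$ by a final CRT step; instead the divisibility by $[n]$ must be proved for the \emph{deformed} sum, uniformly in $a$ and $b$ (this is Lemma~\ref{lemm-b}: a suitable specialization of \eqref{GR} makes the deformed sum vanish at every root of unity of odd order $m\mid n$, and the standard divisor argument yields the factor $[n]$), so that $[n]$ enters the CRT together with the pairwise coprime polynomials $(1-aq^n)(a-q^n)$ and $(1-bq^n)(b-q^n)$ before any limits are taken. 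Your proposed $k\leftrightarrow(n-1)/2-k$ pairing is unsubstantiated for this summand and, even if valid at $a=b=1$, would be insufficient because the CRT is performed at the parametric level. Once the congruence modulo $[n](1-aq^n)(a-q^n)(1-bq^n)(b-q^n)$ is in place (with CRT units as in \eqref{Chinese-c}--\eqref{Chinese-d}), letting $b\to1$ and then $a\to1$ (the latter by L'H\^{o}pital, as in the paper's $\Omega(a,q,n)$ computation) indeed produces the factor $2-q^n$ and the alternating sum $\sum_{j=1}^{(n-1)/2}(-1)^{j+1}q^{2j}/[2j]^2$, confirming your guess about where the computational work lies; but without the corrected deformation and the parametric $[n]$-divisibility the plan does not yet constitute a proof.
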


Taking $n=p^r$ and then letting $q\to1$ in Theorem \ref{thm-d}, we
are led to the formula.

\begin{cor}\label{cor-d}
Let $p$ be an odd prime and $r$ a positive integer satisfying
$p^r\equiv 1\pmod 4$. Then
\begin{align*}
&\sum_{k=0}^{(p^r-1)/2}(6k+1)\frac{(\frac{1}{2})_k^3(\frac{1}{4})_k^2}{k!^5}
\\[3pt]&\qquad \equiv
\frac{p^r}{8}\frac{(\frac{1}{2})_{(p^r-1)/4}^2}{(1)_{(p^r-1)/4}^2}\Big\{8+2p^{2r}H_{(p^r-1)/2}^{(2)}-p^{2r}H_{(p^r-1)/4}^{(2)}\Big\}\pmod{p^{r+4}}.
\end{align*}
\end{cor}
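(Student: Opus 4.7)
The plan is to deduce Corollary~\ref{cor-d} from Theorem~\ref{thm-d} by specializing $n=p^r$ (with $M=(p^r-1)/2$) and letting $q\to 1$. The argument splits into three steps: pass the left-hand side to its classical limit; convert the right-hand side to harmonic-sum form via an alternating-harmonic identity; and interpret the cyclotomic modulus as a $p$-adic modulus.

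For the first step, I would invoke the standard limits $\lim_{q\to 1}(q^a;q^b)_k/(1-q^b)^k=(a/b)_k$ and $[m]\to m$. In
\[
\sum_{k=0}^{M}[6k+1]\frac{(q;q^2)_k^2(q^2;q^4)_k(q;q^4)_k^2}{(q^2;q^2)_k^2(q^4;q^4)_k^3}q^{2k},
\]
the factors $(1-q^2)^{2k}(1-q^4)^{3k}$ balance between numerator and denominator, and the limit is precisely $\sum_{k=0}^{(p^r-1)/2}(6k+1)(1/2)_k^3(1/4)_k^2/k!^5$, matching the left-hand side of Corollary~\ref{cor-d}.

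For the second step, I use $[p^r]^2(2-q^{p^r})\to p^{2r}$ and $q^{2j}/[2j]^2\to 1/(2j)^2$, so the bracket in Theorem~\ref{thm-d} becomes
\[
1+\frac{p^{2r}}{4}\sum_{j=1}^{(p^r-1)/2}\frac{(-1)^{j+1}}{j^2}.
\]
The hypothesis $p^r\equiv 1\pmod 4$ forces $(p^r-1)/2$ to be even, so splitting the alternating sum into even and odd indices yields
\[
\sum_{j=1}^{(p^r-1)/2}\frac{(-1)^{j+1}}{j^2}=H_{(p^r-1)/2}^{(2)}-\tfrac{1}{2}H_{(p^r-1)/4}^{(2)}.
\]
Combining this with the prefactor $p^r(1/2)_{(p^r-1)/4}^2/(1)_{(p^r-1)/4}^2$ (the limit of $[n](q^2;q^4)_{(n-1)/4}^2/(q^4;q^4)_{(n-1)/4}^2$) and pulling out a common factor of $1/8$ reproduces exactly the right-hand side of Corollary~\ref{cor-d}.

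For the third step, $[p^r]\to p^r$ and $\Phi_{p^r}(1)=p$, so the cyclotomic modulus $[n]\Phi_n(q)^4$ takes the value $p^{r+4}$ at $q=1$. Since the difference of the two sides of Theorem~\ref{thm-d} is a rational function in $q$ divisible by $[n]\Phi_n(q)^4$ whose denominator does not vanish at $q=1$, the $q$-congruence descends to a $p$-adic congruence modulo $p^{r+4}$. The main bookkeeping step is the second one, where I must carefully track the alternating-to-nonalternating harmonic identity, the common factor of $1/8$, and the parity condition $p^r\equiv 1\pmod 4$ that guarantees $(p^r-1)/4$ is an integer; the rest is a mechanical specialization.
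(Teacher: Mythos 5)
Your proposal is correct and follows exactly the paper's route: the paper obtains Corollary~\ref{cor-d} by the same specialization $n=p^r$, $M=(n-1)/2$, $q\to1$ in Theorem~\ref{thm-d} (stated there without the intermediate details you supply). Your bookkeeping — the balancing of the $(1-q^2)^{2k}(1-q^4)^{3k}$ factors, the limit of the bracket, the identity $\sum_{j=1}^{N}(-1)^{j+1}/j^2=H_N^{(2)}-\tfrac12 H_{N/2}^{(2)}$ for even $N=(p^r-1)/2$, and the evaluation $[p^r]\Phi_{p^r}(q)^4\big|_{q=1}=p^{r+4}$ — is all accurate.
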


The rest of the paper is arranged as follows. By means of Gasper and
Rahman's summation for basic hypergeometric series, the creative
microscoping method, and the Chinese remainder theorem for coprime
polynomials, we shall deduce the parametric extensions of Theorems
\ref{thm-a} and \ref{thm-b} and then prove these two theorems in
Section 2. Theorems \ref{thm-c} and \ref{thm-d} can similarly be
proved. The corresponding details are deferred to Section 3.

\section{Proof of Theorems \ref{thm-a} and \ref{thm-b}}
For the sake of proving Theorems \ref{thm-a} and \ref{thm-b}, we
require the following Lemma.

\begin{lem}\label{lemm-a}
Let $n$ be a positive odd integer. Then
\begin{align}\label{eq:wei-a}
\sum_{k=0}^{M}[6k+1]\frac{(q,aq,q/a;q^2)_k(q/b;q^4)_k}{(q^4,q^4/a,aq^4;q^4)_k(bq^2;q^2)_k}q^{k^2+k}b^k
\equiv0\pmod{[n]}.
\end{align}
where $M=(n-1)/2$ or $n-1$.
\end{lem}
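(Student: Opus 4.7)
The plan is to prove the claim by establishing the congruence modulo each cyclotomic polynomial factor of $[n]$. Using the factorization $[n]=\prod_{d\mid n,\, d>1}\Phi_d(q)$, it suffices to show that the left-hand side vanishes modulo $\Phi_d(q)$ for every divisor $d>1$ of $n$.

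Fix such a divisor $d$, which is necessarily odd because $n$ is odd, let $\zeta$ be a primitive $d$-th root of unity, write $\ell=(d-1)/2$, and denote the $k$-th summand by $T_k(a,b;q)$. I would first observe that for $k\geq\ell+1$ the factor $(q;q^2)_k=\prod_{j=1}^{k}(1-q^{2j-1})$ vanishes at $q=\zeta$, since the index $j=(d+1)/2$ contributes $1-\zeta^{d}=0$. Hence only the terms with $0\leq k\leq\ell$ survive modulo $\Phi_d(q)$, regardless of whether $M=(n-1)/2$ or $M=n-1$.

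Next I would establish the pairing identity
\begin{equation*}
T_{\ell-k}(a,b;q)\equiv -T_k(a,b;q)\pmod{\Phi_d(q)},\qquad 0\leq k\leq\ell.
\end{equation*}
This is carried out by a direct computation at $q=\zeta$: using $\zeta^{d}=1$ together with standard reversal identities for $q$-shifted factorials, each of $(q,aq,q/a;q^2)_{\ell-k}$, $(q/b;q^4)_{\ell-k}$, $(q^4,q^4/a,aq^4;q^4)_{\ell-k}$, and $(bq^2;q^2)_{\ell-k}$ is rewritten in terms of its $k$-indexed counterpart, up to explicit $q$-power factors. The $q$-power $q^{(\ell-k)^2+(\ell-k)}$ and the monomial $b^{\ell-k}$, together with the transformation of $[6(\ell-k)+1]$ (which uses $q^{3d}=1$), then combine to yield precisely the sign $-1$ as $k\mapsto\ell-k$, uniformly in $a$ and $b$. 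A quick sanity check in the smallest case $d=3$, $\ell=1$: at $q=\zeta_3$ one has $[7]=1$, $(1-q^4)=(1-q)$, $(1-q^4/a)=(1-q/a)$, $(1-aq^4)=(1-aq)$, and the identity reduces to $(b-q)q^2/(1-bq^2)=-1$, which is equivalent to $q^3=1$.

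Once the pairing identity is in hand, the vanishing of $\sum_{k=0}^{\ell}T_k$ modulo $\Phi_d(q)$ is immediate: pairs $(k,\ell-k)$ with $k\neq\ell-k$ cancel, and a fixed point (occurring when $\ell$ is even, at $k=\ell/2$) satisfies $T_{\ell/2}\equiv -T_{\ell/2}$, forcing $T_{\ell/2}\equiv 0$ because $\Phi_d(q)$ is coprime to $2$. Assembling this over all $d\mid n$ with $d>1$ yields the stated congruence modulo $[n]$. The main obstacle is the meticulous bookkeeping required in the pairing step: several $q$-powers coming from the reversal of the four numerator and denominator blocks, from the exponent $q^{k^2+k}$, and from the prefactor $[6k+1]$ must conspire to give exactly $-1$ as $k\mapsto\ell-k$, and this must hold independently of the auxiliary parameters $a$ and $b$.
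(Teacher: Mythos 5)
Your overall frame (reduce modulo $\Phi_d(q)$ for each divisor $d>1$ of $n$ and recombine) is fine, but the two steps that carry the proof both fail. The fatal one is the pairing identity $T_{\ell-k}\equiv -T_k\pmod{\Phi_d(q)}$ with $\ell=(d-1)/2$: it is simply not true for general odd $d$. For $d=5$ your fixed-point argument would force $T_{1}\equiv 0\pmod{\Phi_5(q)}$, but $T_1=[7]\,\frac{(1-q)(1-aq)(1-q/a)(1-q/b)}{(1-q^4)(1-q^4/a)(1-aq^4)(1-bq^2)}\,q^{2}b$ has no factor vanishing at a primitive fifth root of unity for generic $a,b$ (note $[7]=(1-q^2)/(1-q)\neq 0$ there). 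For $d=7$ the pair $(1,2)$ fails: $T_1$ contains $[7]\equiv 0\pmod{\Phi_7(q)}$ while $T_2$ is generically nonzero at a primitive seventh root of unity, so $T_2\equiv -T_1$ is false. Your sanity check at $d=3$ is not evidence for a termwise symmetry: with only two surviving terms, ``the sum vanishes'' and ``the terms pair off'' are the same statement, and that vanishing is exactly the content of the lemma for $d=3$. In fact the vanishing of $\sum_{k=0}^{\ell}T_k$ at a primitive $d$-th root of unity is not a symmetry phenomenon at all; it is a closed-form evaluation. The paper gets it by first deriving from Gasper and Rahman's formula \eqref{GR} the summation \eqref{GR-a}, then specializing $a\to q^{1-n}$, $b\to aq$, $f\to q/b$, $q\to q^2$, where the right-hand side vanishes identically; this analytic input is what your proposal is missing, and no amount of bookkeeping with reversal identities will substitute for it.

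There is a second, independent gap in the reduction step. Your claim that modulo $\Phi_d(q)$ only the terms $0\leq k\leq \ell$ survive is correct only for $d=n$. For a proper divisor $d<n$ and $k\geq d$, the numerator factor $(q;q^2)_k$ and the denominator factor $(q^4;q^4)_k$ (through $1-q^{4j}$ with $d\mid j$) both vanish at $q=\zeta$, so the term is a $0/0$ expression whose value at $\zeta$ is generically nonzero; e.g.\ for $n=9$, $d=3$, the terms $k=3,4$ do not vanish at a primitive cube root of unity. This is precisely why the paper, after establishing the congruence modulo $\Phi_m(q)$ for the first block, uses the multiplicative block structure $\alpha_{\zeta}(jm+k)=\alpha_{\zeta}(jm)\alpha_{\zeta}(k)$ to show that the sums truncated at $(n-1)/2$ and at $n-1$ split into blocks each proportional to the vanishing block sum, and only then concludes divisibility by $[n]=\prod_{m\mid n,\,m>1}\Phi_m(q)$. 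To repair your argument you would need both this block decomposition and, in place of the pairing, the specialized Gasper--Rahman evaluation for the basic block.
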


\begin{proof}
Above all, it is ordinary to realize that the $n=1$ case of Lemma
\ref{lemm-a} is right. Afterwards, we shall consider the $n>1$ case.
Setting $d= q^{-2n}$ and then letting $n\to\infty$ in \eqref{GR}, we
have
\begin{align}\label{GR-a}
&\sum_{k=0}^{\infty}\frac{1-aq^{3k}}{1-a}\frac{(a,b,q/b;q)_k(f;q^2)_k}{(q^2,aq^2/b,abq;q^2)_k(aq/f;q)_k}q^{\frac{k^2+k}{2}}\bigg(\frac{a}{f}\bigg)^k
\notag\\[3pt]
&\quad=\frac{(aq,aq^2,aq^2/bf,abq/f;q^2)_{\infty}}{(aq/f,aq^2/f,aq^2/b,abq;q^2)_{\infty}}.
\end{align}
The case $a\to q^{1-n}$, $b\to aq$, $f\to q/b$, $q\to q^2$ of it
gives
\begin{align*}
\sum_{k=0}^{M}\frac{1-q^{1+6k-n}}{1-q^{1-n}}\frac{(q^{1-n},aq,q/a;q^2)_k(q/b;q^4)_k}{(q^4,q^{4-n}/a,aq^{4-n};q^4)_k(bq^{2-n};q^2)_k}q^{k^2+k}(bq^{-n})^k
=0.
\end{align*}
On account of $q^n\equiv1\pmod{\Phi_n(q)}$, we find
\begin{align}\label{eq:wei-b}
\sum_{k=0}^{M}[6k+1]\frac{(q,aq,q/a;q^2)_k(q/b;q^4)_k}{(q^4,q^4/a,aq^4;q^4)_k(bq^2;q^2)_k}q^{k^2+k}b^k
\equiv0\pmod{\Phi_n(q)}.
\end{align}

Let $\alpha_q(k)$ stand for the $k$-th term on the left-hand side of
\eqref{eq:wei-b}, i.e.,
\begin{align*}
\alpha_q(k)=[6k+1]\frac{(q,aq,q/a;q^2)_k(q/b;q^4)_k}{(q^4,q^4/a,aq^4;q^4)_k(bq^2;q^2)_k}q^{k^2+k}b^k.
\end{align*}
Let $\zeta\neq1$ be an $n$-th root of unity, which is not
necessarily primitive. This implies that $\zeta$ is a primitive root
of unity of odd degree $m|n$. The $q$-congruence \eqref{eq:wei-b}
with $n=m$ shows that
\begin{align*}
\sum_{k=0}^{m-1}\alpha_{\zeta}(k)=\sum_{k=0}^{(m-1)/2}\alpha_{\zeta}(k)=0.
\end{align*}
In terms of the relation:
\begin{align*}
\frac{\alpha_{\zeta}(jm+k)}{\alpha_{\zeta}(jm)}=\lim_{q\to\zeta}\frac{\alpha_{q}(jm+k)}{\alpha_{q}(jm)}=\alpha_{\zeta}(k),
\end{align*}
There holds
\begin{align*}
\sum_{k=0}^{n-1}\alpha_{\zeta}(k)=\sum_{j=0}^{n/m-1}\sum_{k=0}^{m-1}\alpha_{\zeta}(jm+k)
=\sum_{j=0}^{n/m-1}\alpha_{\zeta}(jm)\sum_{k=0}^{m-1}\alpha_{\zeta}(k)=0,
\end{align*}
\begin{align*}
\sum_{k=0}^{(n-1)/2}\alpha_{\zeta}(k)=
\sum_{j=0}^{(n/m-3)/2}\alpha_{\zeta}(jm)\sum_{k=0}^{m-1}\alpha_{\zeta}(k)+\sum_{k=0}^{(m-1)/2}\alpha_{\zeta}((n-m)/2+k)=0.
\end{align*}
The last two equations indicate that $\sum_{k=0}^{n-1}\alpha_{q}(k)$
and $\sum_{k=0}^{(n-1)/2}\alpha_{q}(k)$ are both divisible by the
cyclotomic polynomials $\Phi_m(q)$. Since this is correct for any
divisor $m>1$ of $n$, we can point out that they are divisible by
\begin{equation*}
\prod_{m|n,m>1}\Phi_m(q)=[n].
\end{equation*}
Therefore, we complete the proof of Lemma \ref{lemm-a}.

\end{proof}

Now we display a parametric extension of Theorem \ref{thm-a}.

\begin{thm}\label{thm-e}
Let $n$ be a positive integer subject to $n\equiv 3\pmod 4$.
Then, modulo $[n](1-aq^n)(a-q^n)(b-q^n)$,
\begin{align}
&\sum_{k=0}^{n-1}[6k+1]\frac{(q,aq,q/a;q^2)_k(q/b^3;q^4)_k}{(q^4,q^4/a,aq^4;q^4)_k(b^3q^2;q^2)_k}q^{k^2+k}b^{3k}
\notag\\[3pt]
 &\quad\equiv
 \frac{(b-q^n)(ab-1-a^2+aq^n)}{(a-b)(1-ab)}\frac{(q^3,1/b^3;q^4)_{(n+1)/4}}{(1/q,b^3q^2;q^4)_{(n+1)/4}}\bigg(\frac{b^3}{q}\bigg)^{\frac{n+1}{4}}
\notag\\[3pt]
&\qquad+\frac{(1-aq^n)(a-q^n)}{(a-b)(1-ab)}\frac{(q^3,q^5;q^4)_{(3n-1)/4}}{(aq^4,q^4/a;q^2)_{(3n-1)/4}}.
\label{eq:wei-c}
\end{align}
 \end{thm}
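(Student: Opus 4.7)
The approach is to combine the Chinese remainder theorem for coprime polynomials with the specialized Gasper--Rahman identity. Observe that the four polynomials $[n]$, $1-aq^n$, $a-q^n$ and $b-q^n$ are pairwise coprime: the first depends only on $q$, the second and third are linear in $a$ with distinct roots $q^{\mp n}$, and the last is linear in $b$. It therefore suffices to verify the claimed congruence modulo each factor separately and then reassemble via the CRT.

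Modulo $[n]$ the left-hand side vanishes at once, by Lemma \ref{lemm-a} applied with $b\mapsto b^3$. For the right-hand side, the first term is divisible by $[n]$ because $(q^3;q^4)_{(n+1)/4}$ contains the factor $1-q^n$ at index $(n-3)/4$ in its numerator, while the denominator $(1/q;q^4)_{(n+1)/4}$ contributes the single $1-q$ needed to reduce $1-q^n$ to $[n]=(1-q^n)/(1-q)$; the second term is divisible by $[n]$ because $(q^5;q^4)_{(3n-1)/4}$ contains $1-q^{3n}=(1-q^n)(1+q^n+q^{2n})$ at its last index, with no compensating $1-q$ anywhere in the denominator.

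For the remaining three factors the plan is to exploit the infinite-sum identity obtained from \eqref{GR-a} by the substitutions $a\mapsto q$, $b\mapsto aq$, $f\mapsto q/b^3$, $q\mapsto q^2$:
\begin{equation*}
\sum_{k=0}^{\infty}[6k+1]\frac{(q,aq,q/a;q^2)_k(q/b^3;q^4)_k}{(q^4,q^4/a,aq^4;q^4)_k(b^3q^2;q^2)_k}q^{k^2+k}b^{3k}=\frac{(q^3,q^5,q^3b^3/a,ab^3q^3;q^4)_{\infty}}{(b^3q^2,b^3q^4,q^4/a,aq^4;q^4)_{\infty}},
\end{equation*}
whose left-hand side matches the LHS of Theorem \ref{thm-e}. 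At $a=q^{-n}$ (respectively $a=q^n$) the factor $(aq;q^2)_k$ (respectively $(q/a;q^2)_k$) vanishes for $k\ge(n+1)/2$, so the sum terminates at $(n-1)/2$; a short calculation shows that the prefactor $(b-q^n)(ab-1-a^2+aq^n)/((a-b)(1-ab))$ on the right-hand side simplifies to $1$ at both $a=q^{\pm n}$, while the second term on the right-hand side vanishes. The Gasper--Rahman infinite product can then be telescoped via the elementary identity $(x;q^4)_{\infty}/(xq^{4m};q^4)_{\infty}=(x;q^4)_m$ to yield the stated $(b^3/q)^{(n+1)/4}(q^3,1/b^3;q^4)_{(n+1)/4}/(1/q,b^3q^2;q^4)_{(n+1)/4}$. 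At $b=q^n$ the factor $(q/b^3;q^4)_k=(q^{1-3n};q^4)_k$ vanishes for $k\ge(3n+3)/4$ (an integer because $n\equiv 3\pmod 4$), so the sum terminates at $(3n-1)/4$; the first term on the RHS vanishes, the prefactor $(1-aq^n)(a-q^n)/((a-b)(1-ab))$ of the second term reduces to $1$, and an analogous telescoping of the product delivers $(q^3,q^5;q^4)_{(3n-1)/4}/(aq^4,q^4/a;q^2)_{(3n-1)/4}$.

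The main obstacle will be carrying out the product telescopings in each of the three specializations, particularly keeping track of the interplay between $q^2$- and $q^4$-bases so as to produce the stated denominator $(aq^4,q^4/a;q^2)_{(3n-1)/4}$ rather than a pure $q^4$-Pochhammer in the $b=q^n$ case, and verifying that the numerical prefactor identifications use $n\equiv 3\pmod 4$ in an essential way. Once these three specialized identities and the divisibility modulo $[n]$ are in place, the CRT yields the desired congruence modulo the full product $[n](1-aq^n)(a-q^n)(b-q^n)$.
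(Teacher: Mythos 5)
Your proposal follows the paper's own route essentially verbatim: Lemma \ref{lemm-a} with $b\mapsto b^3$ handles the modulus $[n]$, the two specializations $a=q^{\pm n}$ and $b=q^n$ are evaluated through the Gasper--Rahman summation in the form \eqref{GR-a}, and the pieces are reassembled by the Chinese remainder theorem with exactly the cofactors \eqref{Chinese-a} and \eqref{Chinese-b} (your observation that these prefactors equal $1$ at the respective specializations is precisely what those two congruences encode). Writing the generic infinite-sum identity first and specializing afterwards is only a cosmetic difference from the paper, and your explicit verification that the right-hand side is divisible by $[n]$ is a detail the paper leaves implicit, so that part is a welcome addition.

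One warning about the step you single out as the main obstacle: the telescoping in the $b=q^n$ case cannot be made to yield the printed denominator $(aq^4,q^4/a;q^2)_{(3n-1)/4}$. From your product the relevant ratios are $(aq^{3+3n};q^4)_\infty/(aq^4;q^4)_\infty$ and $(q^{3+3n}/a;q^4)_\infty/(q^4/a;q^4)_\infty$, and since $n\equiv 3\pmod 4$ gives $3+3n\equiv 0\pmod 4$, these collapse to $1/(aq^4;q^4)_{(3n-1)/4}$ and $1/(q^4/a;q^4)_{(3n-1)/4}$; the base is $q^4$, not $q^2$. A direct check at $n=3$ with generic $q,a$ confirms that the terminating sum at $b=q^n$ equals $(q^3,q^5;q^4)_{(3n-1)/4}/(aq^4,q^4/a;q^4)_{(3n-1)/4}$ and differs from the $q^2$-base expression, so the base $q^2$ in \eqref{eq:wei-c} and \eqref{eq:wei-h} is evidently a misprint (compare \eqref{eq:wei-i}, \eqref{eq:wei-r} and \eqref{eq:wei-x}, where the analogous denominators carry the base $q^4$). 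Your argument therefore proves the corrected form of Theorem \ref{thm-e}; do not spend effort trying to force the interplay of $q^2$- and $q^4$-bases to reach the literal statement, since no bookkeeping will produce it. This discrepancy is harmless for the subsequent proof of Theorem \ref{thm-a}, which only uses that the numerator $(q^3,q^5;q^4)_{(3n-1)/4}$ of the second term contains the factor $(1-q^n)^2$.
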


\begin{proof}
When $a=q^{-n}$ or $a=q^n$, the left-hand side of  \eqref{eq:wei-c}
equals
\begin{align}
\sum_{k=0}^{n-1}[6k+1]\frac{(q,q^{1-n},q^{1+n};q^2)_k(q/b^3;q^4)_k}{(q^4,q^{4+n},q^{4-n};q^4)_k(b^3q^2;q^2)_k}q^{k^2+k}b^{3k}.
\label{eq:wei-d}
\end{align}
Via \eqref{GR-a}, the series \eqref{eq:wei-d} can be restated as
\begin{align*}
\frac{(q^3,1/b^3;q^4)_{(n+1)/4}}{(1/q,b^3q^2;q^4)_{(n+1)/4}}\bigg(\frac{b^3}{q}\bigg)^{\frac{n+1}{4}}.
\end{align*}
Because $(1-aq^n)$ and $(a-q^n)$ are pairwise relatively prime
polynomials, we discover the conclusion: modulo $(1-aq^n)(a-q^n)$,
\begin{align}
\sum_{k=0}^{n-1}[6k+1]\frac{(q,aq,q/a;q^2)_k(q/b^3;q^4)_k}{(q^4,q^4/a,aq^4;q^4)_k(b^3q^2;q^2)_k}q^{k^2+k}b^{3k}
\equiv
 \frac{(q^3,1/b^3;q^4)_{(n+1)/4}}{(1/q,b^3q^2;q^4)_{(n+1)/4}}\bigg(\frac{b^3}{q}\bigg)^{\frac{n+1}{4}}.
\label{eq:wei-f}
\end{align}

When $b=q^{n}$, the left-hand side of  \eqref{eq:wei-c} is equal to
\begin{align}
\sum_{k=0}^{n-1}[6k+1]\frac{(q,aq,q/a;q^2)_k(q^{1-3n};q^4)_k}{(q^4,q^4/a,aq^4;q^4)_k(q^{2+3n};q^2)_k}q^{k^2+k+3nk}.
\label{eq:wei-g}
\end{align}
Through \eqref{GR-a}, the series \eqref{eq:wei-g} can be rewritten
as
\begin{align*}
\frac{(q^3,q^5;q^4)_{(3n-1)/4}}{(aq^4,q^4/a;q^2)_{(3n-1)/4}}.
\end{align*}
So we there is the result: modulo $(b-q^n)$,
\begin{align}
\sum_{k=0}^{n-1}[6k+1]\frac{(q,aq,q/a;q^2)_k(q/b^3;q^4)_k}{(q^4,q^4/a,aq^4;q^4)_k(b^3q^2;q^2)_k}q^{k^2+k}b^{3k}
\equiv
 \frac{(q^3,q^5;q^4)_{(3n-1)/4}}{(aq^4,q^4/a;q^2)_{(3n-1)/4}}.
\label{eq:wei-h}
\end{align}

It is clear that the polynomials $(1-aq^n)(a-q^n)$, $(b-q^n)$, and
$[n]$ are relatively prime to one another. Noting the relations
\begin{align}
&\frac{(b-q^n)(ab-1-a^2+aq^n)}{(a-b)(1-ab)}\equiv1\pmod{(1-aq^n)(a-q^n)},
\label{Chinese-a}
\\[5pt]
\label{Chinese-b}
&\qquad\qquad\frac{(1-aq^n)(a-q^n)}{(a-b)(1-ab)}\equiv1\pmod{(b-q^n)},
\end{align}
and employing the Chinese remainder theorem for coprime polynomials,
we can derive Theorem \ref{thm-e} from Lemma \ref{lemm-a},
\eqref{eq:wei-f}, and \eqref{eq:wei-h}.
\end{proof}

\begin{proof}[Proof of Theorem \ref{thm-a}]
The $b\to1$ case of Theorem \ref{thm-e} produces the formula: modulo
$[n]\Phi_n(q)(1-aq^n)(a-q^n)$,
\begin{align*}
&\sum_{k=0}^{n-1}[6k+1]\frac{(aq,q/a;q^2)_k(q;q^2)_k(q;q^4)_k}{(q^4/a,aq^4;q^4)_k(q^2;q^2)_k(q^4;q^4)_k}q^{k^2+k}\\
 &\quad\equiv
\frac{(1-aq^n)(a-q^n)}{-(1-a)^2}\frac{(q^3,q^5;q^4)_{(3n-1)/4}}{(aq^4,q^4/a;q^2)_{(3n-1)/4}}.
\end{align*}
Considering that there is the factor $(1-q^n)^2$ in the numerator
$(q^3,q^5;q^4)_{(3n-1)/4}$, we obtain the $q$-supercongruence:
\begin{align*}
&\sum_{k=0}^{n-1}[6k+1]\frac{(aq,q/a;q^2)_k(q;q^2)_k(q;q^4)_k}{(q^4/a,aq^4;q^4)_k(q^2;q^2)_k(q^4;q^4)_k}q^{k^2+k}\equiv0\pmod{[n]\Phi_n(q)(1-aq^n)(a-q^n)}.
\end{align*}
When $q\to1$, it reduces to Theorem \ref{thm-a} exactly.

Next, we shall provide a parametric extension of Theorem
\ref{thm-b}.

\begin{thm}\label{thm-f}
Let $n$ be a positive integer subject to $n\equiv 1\pmod 4$.
Then, modulo $[n](1-aq^n)(a-q^n)(b-q^n)$,
\begin{align}
&\sum_{k=0}^{M}[6k+1]\frac{(q,aq,q/a;q^2)_k(q/b;q^4)_k}{(q^4,q^4/a,aq^4;q^4)_k(bq^2;q^2)_k}q^{k^2+k}b^{k}
\notag\\[3pt]
 &\quad\equiv\frac{(b-q^n)(ab-1-a^2+aq^n)}{(a-b)(1-ab)}\frac{(q^5,q^2/b;q^4)_{(n-1)/4}}{(q,bq^4;q^4)_{(n-1)/4}}\bigg(\frac{b}{q}\bigg)^{\frac{n-1}{4}}
\notag\\[3pt]
&\qquad+\frac{(1-aq^n)(a-q^n)}{(a-b)(1-ab)}\frac{(q^3,q^5;q^4)_{(n-1)/4}}{(aq^4,q^4/a;q^4)_{(n-1)/4}},
\label{eq:wei-i}
\end{align}
where $M=(n-1)/2$ or $n-1$.
 \end{thm}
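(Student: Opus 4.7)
The plan is to follow the template of the proof of Theorem \ref{thm-e} closely. Three ingredients combine to give the result: the divisibility by $[n]$ supplied by Lemma \ref{lemm-a}, two closed-form evaluations of the sum at special parameter values obtained from the Gasper--Rahman summation \eqref{GR-a}, and the Chinese remainder theorem for coprime polynomials.

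First, I note that the summand in \eqref{eq:wei-i} is \emph{literally} the summand of \eqref{eq:wei-a}, so Lemma \ref{lemm-a} delivers divisibility of the left-hand side by $[n]$ for free, for either value of $M$. Next I would substitute $a=q^{-n}$ and separately $a=q^{n}$ into the sum. In both cases one of the factors $(aq;q^2)_k$ or $(q/a;q^2)_k$ becomes $(q^{1-n};q^2)_k$, which truncates the series at $k=(n-1)/2$, so the choice of $M$ is immaterial and the sum becomes a terminating one. This is precisely the specialization of \eqref{GR-a} (after $q\to q^2$) with parameters $A=q$, $B=aq$, $F=q/b$. Applying the identity and using the hypothesis $n\equiv 1\pmod 4$ to collapse the resulting infinite-product right-hand side into finite $q^4$-shifted factorials of length $(n-1)/4$, one recovers the first branch $(q^5,q^2/b;q^4)_{(n-1)/4}/(q,bq^4;q^4)_{(n-1)/4}\cdot(b/q)^{(n-1)/4}$, which gives the congruence modulo $(1-aq^n)(a-q^n)$.

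I would then set $b=q^{n}$; now the factor $(q/b;q^4)_k=(q^{1-n};q^4)_k$ is what truncates the sum, this time at $k=(n-1)/4$ --- and this step uses $n\equiv 1\pmod 4$ in an essential way, since otherwise $1-n$ would not be a multiple of $4$. Applying \eqref{GR-a} a second time with $F=q^{1-n}$, the right-hand side telescopes into the second branch $(q^3,q^5;q^4)_{(n-1)/4}/(aq^4,q^4/a;q^4)_{(n-1)/4}$, yielding the congruence modulo $(b-q^n)$. Finally, since the three moduli $[n]$, $(1-aq^n)(a-q^n)$, and $(b-q^n)$ are pairwise relatively prime, and since the two rational prefactors on the right-hand side of \eqref{eq:wei-i} are tailored so that each reduces to $1$ modulo the complementary pair --- exactly as recorded in \eqref{Chinese-a}--\eqref{Chinese-b} --- the Chinese remainder theorem for coprime polynomials assembles the three partial results into \eqref{eq:wei-i}.

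The main obstacle I anticipate is the bookkeeping in the two closed-form evaluations: extracting the compact finite products appearing in \eqref{eq:wei-i} from the infinite-product right-hand side of \eqref{GR-a} requires careful use of $(x;q^4)_\infty=(x;q^4)_N\,(xq^{4N};q^4)_\infty$ together with its negative-shift counterpart, and hinges on $n\equiv 1\pmod 4$ at several places to guarantee that the shift indices are nonnegative integers of the predicted size $(n-1)/4$. The remaining pieces of the argument --- termination of the series, pairwise coprimality of the moduli, and the CRT step --- are purely formal and identical in structure to what is carried out in Theorem \ref{thm-e}.
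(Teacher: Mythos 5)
Your proposal matches the paper's own proof essentially step for step: divisibility by $[n]$ from Lemma \ref{lemm-a}, the two terminating evaluations at $a=q^{\pm n}$ (via \eqref{GR-a} with $q\to q^2$, $a\to q$, $b\to q^{1-n}$, $f\to q/b$) and at $b=q^n$ (with $b\to aq$, $f\to q^{1-n}$), and the assembly through \eqref{Chinese-a}--\eqref{Chinese-b} and the Chinese remainder theorem for coprime polynomials. The bookkeeping you flag (collapsing the infinite products of \eqref{GR-a} to the $(n-1)/4$-length $q^4$-factorials) is exactly what the paper also leaves as a routine verification, so no substantive difference remains.
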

\end{proof}

\begin{proof}
The $q\to q^2$, $a\to q$, $b\to q^{1-n}$, $f\to q/b$ case of
\eqref{GR-a} reads
\begin{align*}
\sum_{k=0}^{M}[6k+1]\frac{(q,q^{1-n},q^{1+n};q^2)_k(q/b;q^4)_k}{(q^4,q^{4+n},q^{4-n};q^4)_k(bq^2;q^2)_k}q^{k^2+k}b^{k}
=\frac{(q^5,q^2/b;q^4)_{(n-1)/4}}{(q,bq^4;q^4)_{(n-1)/4}}\bigg(\frac{b}{q}\bigg)^{\frac{n-1}{4}}.
\end{align*}
Thus we get hold of the conclusion: modulo $(1-aq^n)(a-q^n)$,
\begin{align}
\sum_{k=0}^{M}[6k+1]\frac{(q,aq,q/a;q^2)_k(q/b;q^4)_k}{(q^4,q^4/a,aq^4;q^4)_k(bq^2;q^2)_k}q^{k^2+k}b^{k}
\equiv
 \frac{(q^5,q^2/b;q^4)_{(n-1)/4}}{(q,bq^4;q^4)_{(n-1)/4}}\bigg(\frac{b}{q}\bigg)^{\frac{n-1}{4}}.
\label{eq:wei-j}
\end{align}

The $q\to q^2$, $a\to q$, $b\to aq$, $f\to q^{1-n}$  case of
\eqref{GR-a} is
\begin{align*}
\sum_{k=0}^{M}[6k+1]\frac{(q,aq,q/a;q^2)_k(q^{1-n};q^4)_k}{(q^4,q^{4}/a,aq^{4};q^4)_k(q^{2+n};q^2)_k}q^{k^2+k+nk}
=\frac{(q^3,q^5;q^4)_{(n-1)/4}}{(aq^4,q^4/a;q^4)_{(n-1)/4}}.
\end{align*}
 Hence we arrive at the result: modulo
$(b-q^n)$,
\begin{align}
\sum_{k=0}^{M}[6k+1]\frac{(q,aq,q/a;q^2)_k(q/b;q^4)_k}{(q^4,q^4/a,aq^4;q^4)_k(bq^2;q^2)_k}q^{k^2+k}b^{k}
\equiv \frac{(q^3,q^5;q^4)_{(n-1)/4}}{(aq^4,q^4/a;q^4)_{(n-1)/4}}.
\label{eq:wei-k}
\end{align}
Using \eqref{Chinese-a}, \eqref{Chinese-b}, and the Chinese
remainder theorem for coprime polynomials, we can prove Theorem
\ref{thm-f} according to Lemma \ref{lemm-a}, \eqref{eq:wei-j}, and
\eqref{eq:wei-k}.
\end{proof}

\begin{proof}[Proof of Theorem \ref{thm-b}]
Letting $b\to1$ in Theorem \ref{thm-f}, we can show that, modulo
$[n]\Phi_n(q)(1-aq^n)(a-q^n)$,
\begin{align}
&\sum_{k=0}^{M}[6k+1]\frac{(aq,q/a;q^2)_k(q;q^2)_k(q;q^4)_k}{(q^4/a,aq^4;q^4)_k(q^2;q^2)_k(q^4;q^4)_k}q^{k^2+k}
\notag\\[3pt]\notag
 &\quad\equiv
[n]\frac{(q^2;q^4)_{(3n-1)/4}}{(q^4;q^4)_{(n-1)/4}}q^{\frac{1-n}{4}}\\[3pt]
&\qquad+\frac{(1-aq^n)(a-q^n)}{(1-a)^2}\bigg\{\frac{(q^5,q^2;q^4)_{(n-1)/4}}{(q,q^4;q^4)_{(n-1)/4}}q^{\frac{1-n}{4}}
-\frac{(q^5,q^3;q^4)_{(n-1)/4}}{(aq^4,q^4/a;q^4)_{(n-1)/4}}\bigg\}
\notag\\[3pt]\notag &\quad\equiv
[n]\frac{(q^2;q^4)_{(3n-1)/4}}{(q^4;q^4)_{(n-1)/4}}q^{\frac{1-n}{4}}\\[3pt]
&\qquad+\frac{(1-aq^n)(a-q^n)}{(1-a)^2}\bigg\{\frac{(q^5,q^3;q^4)_{(n-1)/4}}{(q^4,q^4;q^4)_{(n-1)/4}}
-\frac{(q^5,q^3;q^4)_{(n-1)/4}}{(aq^4,q^4/a;q^4)_{(n-1)/4}}\bigg\}.
\label{eq:wei-l}
\end{align}
 By the L'H\^{o}spital rule, it is not difficult to evaluate the
 limit
\begin{align*}
&\lim_{a\to1}\frac{1}{(1-a)^2}\bigg\{\frac{(q^5,q^3;q^4)_{(n-1)/4}}{(q^4,q^4;q^4)_{(n-1)/4}}
-\frac{(q^5,q^3;q^4)_{(n-1)/4}}{(aq^4,q^4/a;q^4)_{(n-1)/4}}\bigg\}\\[5pt]
&\quad=-\frac{(q^5,q^3;q^4)_{(n-1)/4}}{(q^4,q^4;q^4)_{(n-1)/4}}
\sum_{j=1}^{(n-1)/4}\frac{q^{4j}}{(1-q^{4j})^2}.
\end{align*}
Letting $a\to1$ in \eqref{eq:wei-l} and utilizing the above limit,
we catch hold of the $q$-supercongruence: modulo $[n]\Phi_n(q)^3$,
\begin{align*}
&\sum_{k=0}^{M}[6k+1]\frac{(q;q^2)_k^3(q;q^4)_k}{(q^2;q^2)_k(q^4;q^4)_k^3}q^{k^2+k}
\\[3pt]&\quad
\equiv[n]\frac{(q^2;q^4)_{(3n-1)/4}}{(q^4;q^4)_{(n-1)/4}}q^{\frac{1-n}{4}}-[n]^2\frac{(q^5,q^3;q^4)_{(n-1)/4}}{(q^4,q^4;q^4)_{(n-1)/4}}
\sum_{j=1}^{(n-1)/4}\frac{q^{4j}}{[4j]^2}
\\[3pt]&\quad
\equiv[n]q^{(1-n)/4}\frac{(q^2;q^4)_{(n-1)/4}}{(q^4;q^4)_{(n-1)/4}}\bigg\{1-[n]^2\sum_{j=1}^{(n-1)/4}\frac{q^{4j}}{[4j]^2}\bigg\}.
\end{align*}
This completes the proof of Theorem \ref{thm-b}.
\end{proof}
\section{Proof of Theorems \ref{thm-c} and \ref{thm-d}}
In order to prove Theorems \ref{thm-c} and \ref{thm-d}, we need the
following lemma.

\begin{lem}\label{lemm-b}
Let $n$ be a positive odd integer. Then
\begin{align*}
\sum_{k=0}^{M}[6k+1]\frac{(aq,q/a;q^2)_k(q^2,bq,q/b;q^4)_k}{(q^4,q^4/a,aq^4;q^4)_k(q^2/b,bq^2;q^2)_k}q^{2k}
\equiv0\pmod{[n]}.
\end{align*}
where $M=(n-1)/2$ or $n-1$.
\end{lem}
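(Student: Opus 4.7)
My plan follows the template of the proof of Lemma \ref{lemm-a} above, but applied to the full Gasper--Rahman identity \eqref{GR} rather than its specialization \eqref{GR-a}, since the summand in Lemma \ref{lemm-b} carries a factor $q^{2k}$ (which matches the $q^k$ tail of \eqref{GR} after $q\mapsto q^2$) in place of the $q^{k^2+k}$ tail present in Lemma \ref{lemm-a}. For odd $n\geq 3$ (the case $n=1$ being trivial since $[1]=1$), I would apply \eqref{GR} with $q\mapsto q^2$ together with $a\mapsto q^{1-n}$, $b\mapsto aq$, $d\mapsto q^{2-2n}$, $f\mapsto bq$, where on the right the letters $a,b$ refer to the parameters of Lemma \ref{lemm-b}. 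A direct computation shows that, modulo $\Phi_n(q)$, the summand of the main series on the left-hand side of \eqref{GR} reduces to the summand of the lemma after cancelling a common factor $(q;q^2)_k$ that appears both in the numerator (coming from $(q^{1-n};q^2)_k\equiv(q;q^2)_k$) and in the denominator (coming from $(q^{1+n};q^2)_k\equiv(q;q^2)_k$); moreover the substituted series terminates at $k=(n-1)/2$ because $(q^{1-n};q^2)_k$ vanishes identically for $k\geq(n+1)/2$.

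It then remains to show that the remaining two pieces in \eqref{GR} vanish under these substitutions. For the ${}_3\phi_2$-term on the left, its prefactor contains the numerator factor $(d;q^2)_\infty\mapsto(q^{2-2n};q^4)_\infty$, which equals zero because the exponent $2-2n+4j$ hits $0$ at $j=(n-1)/2$; a term-by-term check of the eight factors in the denominator of the prefactor shows that no compensating zero arises for generic $a,b$. For the right-hand side of \eqref{GR}, the numerator factor $(aq;q)_\infty\mapsto(q^{3-n};q^2)_\infty$ vanishes for every odd $n\geq 3$ at $j=(n-3)/2$, and again no denominator factor cancels this zero. Hence \eqref{GR} collapses to the statement that the substituted main sum is identically zero. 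Since that sum coincides modulo $\Phi_n(q)$ with the sum in Lemma \ref{lemm-b}, and the tail of the lemma's sum from index $(n+1)/2$ onwards is already $\equiv 0\pmod{\Phi_n(q)}$ thanks to $(q;q^2)_k\equiv 0$ for such $k$, both choices $M=(n-1)/2$ and $M=n-1$ produce a sum $\equiv 0\pmod{\Phi_n(q)}$.

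The final step is the standard lift from $\Phi_n(q)$ to $[n]$, carried out exactly as in the proof of Lemma \ref{lemm-a}: for each odd divisor $m>1$ of $n$, the same argument with $n$ replaced by $m$ shows that the sum vanishes at every primitive $m$-th root of unity, and the quasi-periodicity of the summand telescopes this partial vanishing into vanishing on the full index range $[0,M]$; the cyclotomic decomposition $[n]=\prod_{m\mid n,\,m>1}\Phi_m(q)$ then yields the claim. The main obstacle I anticipate is the bookkeeping in the previous paragraph: cleanly distinguishing the termination-inducing zero $(q^{1-n};q^2)_k$ from the identity-annihilating zeros $(q^{2-2n};q^4)_\infty$ and $(q^{3-n};q^2)_\infty$, and verifying that none of the other infinite-product factors on the right-hand side of \eqref{GR} secretly cancels $(q^{3-n};q^2)_\infty$ for generic values of the parameters $a,b$.
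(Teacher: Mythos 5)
Your proposal is essentially the paper's own proof: the same specialization $a\to q^{1-n}$, $b\to aq$, $d\to q^{2-2n}$, $f\to bq$, $q\to q^2$ in \eqref{GR} (the paper merely asserts that the resulting sum is $0$, whereas you additionally verify that the ${}_3\phi_2$-term and the right-hand side of \eqref{GR} vanish), followed by reduction modulo $\Phi_n(q)$ and the same root-of-unity/quasi-periodicity argument to lift the congruence to the modulus $[n]$. One small correction: for $M=n-1$ the terms of the lemma's sum with $(n+1)/2\leqslant k\leqslant n-1$ vanish modulo $\Phi_n(q)$ because $(q^2;q^4)_k$ contains the factor $1-q^{2n}$, not because of a factor $(q;q^2)_k$, which this summand does not contain.
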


\begin{proof}
On one hand, it is routine to verify the correctness of the $n=1$
case of Lemma \ref{lemm-b}. On the other hand, we shall discuss the
$n>1$ case. Letting $a\to q^{1-n}$, $b\to aq$, $d\to q^{2-2n}$,
$f\to bq$, $q\to q^2$ in \eqref{GR}, we have
\begin{align*}
\sum_{k=0}^{M}\frac{1-q^{1+6k-n}}{1-q^{1-n}}\frac{(q^{1-n},aq,q/a;q^2)_k(q^{2-2n},bq,q/b;q^4)_k}{(q^4,q^{4-n}/a,aq^{4-n};q^4)_k(q^{1+n},q^{2-n}/b,bq^{2-n};q^2)_k}q^{2k}
=0.
\end{align*}
Thanks to $q^n\equiv1\pmod{\Phi_n(q)}$, we find
\begin{align*}
\sum_{k=0}^{M}[6k+1]\frac{(aq,q/a;q^2)_k(q^2,bq,q/b;q^4)_k}{(q^4,q^4/a,aq^4;q^4)_k(q^2/b,bq^2;q^2)_k}q^{2k}
\equiv0\pmod{\Phi_n(q)}.
\end{align*}
Similar to the proof of Lemma \ref{lemm-a}, we can deduce Lemma
\ref{lemm-b} from the last relation.
\end{proof}

Now we shall give a parametric extension of Theorem \ref{thm-c}.

\begin{thm}\label{thm-g}
Let $n$ be a positive integer subject to $n\equiv 3\pmod 4$.
Then, modulo $[n](1-aq^n)(a-q^n)(1-bq^n)(b-q^n)$,
\begin{align}
&\sum_{k=0}^{n-1}[6k+1]\frac{(aq,q/a;q^2)_k(q^2,b^3q,q/b^3;q^4)_k}{(q^4,q^4/a,aq^4;q^4)_k(q^2/b^3,b^3q^2;q^2)_k}q^{2k}
\notag\\[3pt]
 &\quad\equiv\frac{(1-aq^n)(a-q^n)(-1-b^2+bq^n)}{(b-a)(1-ba)}\frac{(q^5,aq^2,q^2/a;q^4)_{(3n-1)/4}}{(q,q^4/a,aq^4;q^4)_{(3n-1)/4}}
\notag\\[3pt]
&\qquad+\frac{(1-bq^n)(b-q^n)(-1-a^2+aq^n)}{(a-b)(1-ab)}\frac{(q^3,b^3,1/b^3;q^4)_{(n+1)/4}}{(1/q,q^2/b^3,b^3q^2;q^4)_{(n+1)/4}}.
\label{eq:wei-n}
\end{align}
 \end{thm}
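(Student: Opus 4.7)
My plan is to mirror the three-step strategy that proved Theorems \ref{thm-e} and \ref{thm-f}. The substitution $b\to b^3$ in Lemma \ref{lemm-b} directly delivers divisibility by $[n]$, so I only need to pin down the left-hand side of \eqref{eq:wei-n} modulo $(1-aq^n)(a-q^n)$ and modulo $(1-bq^n)(b-q^n)$, and then stitch the three residues together via the Chinese remainder theorem for coprime polynomials; these three factors are pairwise coprime in $\mathbb{Q}[a,b,q]$.

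For the first residue I recognise the LHS of \eqref{eq:wei-n} as an instance of the full Gasper--Rahman identity \eqref{GR} with $q\to q^2$ under the parameter substitution $a\to q$, $b\to aq$, $d\to q^2$, $f\to b^3q$ (a common $(q;q^2)_k$ cancels between numerator and denominator of the series on the left of \eqref{GR}). At $a=q^n$, the factor $(q/a;q^2)_k=(q^{1-n};q^2)_k$ truncates the sum, and the companion $(q^{1-n};q^2)_\infty=0$ (since $n$ is odd) inside the coefficient of the ${}_3\phi_2$ kills that term of \eqref{GR}. The right-hand side thus collapses to a single infinite-product expression which, via the standard manipulations $(x;q^2)_\infty=(x;q^4)_\infty(xq^2;q^4)_\infty$ and $(x;q^4)_\infty/(xq^{4N};q^4)_\infty=(x;q^4)_N$ applied with $N=(n+1)/4$ (an integer since $n\equiv 3\pmod 4$), simplifies to $\frac{(q^3,b^3,1/b^3;q^4)_{(n+1)/4}}{(1/q,q^2/b^3,b^3q^2;q^4)_{(n+1)/4}}$. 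The case $a=q^{-n}$ is symmetric (now $(aq;q^2)_\infty$ vanishes) and gives the same closed form, so by coprimality of $(1-aq^n)$ and $(a-q^n)$ the LHS is congruent to this expression modulo $(1-aq^n)(a-q^n)$.

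For the second residue I use a different identification in \eqref{GR} (again with $q\to q^2$): $a\to q$, $b\to aq$, $d\to q/b^3$, $f\to b^3q$. At $b=q^n$ we have $d=q^{1-3n}$, and since $n\equiv 3\pmod 4$ forces $3n\equiv 1\pmod 4$, the $q^4$-shifted factorial $(q^{1-3n};q^4)_k$ terminates the series at $k=(3n-1)/4$, while the companion $(q^{1-3n};q^4)_\infty$ in the coefficient of the ${}_3\phi_2$ vanishes at the same time. At $b=q^{-n}$ the roles of $d$ and $f$ are swapped. In both cases the same kind of telescoping as above collapses the surviving infinite product to $\frac{(q^5,aq^2,q^2/a;q^4)_{(3n-1)/4}}{(q,q^4/a,aq^4;q^4)_{(3n-1)/4}}$, yielding the desired congruence modulo $(1-bq^n)(b-q^n)$.

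Direct substitution confirms that at $b=q^{\pm n}$ the rational selector $\frac{(1-aq^n)(a-q^n)(-1-b^2+bq^n)}{(b-a)(1-ba)}$ evaluates to $1$ and is manifestly divisible by $(1-aq^n)(a-q^n)$; the companion $\frac{(1-bq^n)(b-q^n)(-1-a^2+aq^n)}{(a-b)(1-ab)}$ is handled symmetrically. The Chinese remainder theorem then assembles \eqref{eq:wei-n} from the three residues. The principal obstacle I anticipate is the algebraic bookkeeping that collapses the full Gasper--Rahman right-hand side (a quotient of infinite $q$-Pochhammer products spread across several residue classes modulo $4$) into the compact finite $q^4$-Pochhammer closed forms stated in the theorem; the hypothesis $n\equiv 3\pmod 4$ is essential for the exponents to align under telescoping, which is why the analogous step in Theorems \ref{thm-e} and \ref{thm-f} was lighter, as there the limit identity \eqref{GR-a} has far fewer products to reconcile.
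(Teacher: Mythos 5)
Your proposal is correct and follows the paper's proof essentially verbatim: Lemma \ref{lemm-b} (with $b\mapsto b^3$) supplies the modulus $[n]$, the two specializations of Gasper and Rahman's identity \eqref{GR} yield exactly \eqref{eq:wei-p} and \eqref{eq:wei-r} modulo $(1-aq^n)(a-q^n)$ and $(1-bq^n)(b-q^n)$, and the Chinese remainder theorem with the selectors \eqref{Chinese-c}--\eqref{Chinese-d} assembles \eqref{eq:wei-n}. The explicit parameter choices you give for \eqref{GR} (left implicit in the paper) are the right ones, and the infinite products do collapse to the stated finite $q^4$-factorial quotients, so the remaining step is only the routine bookkeeping that the paper also omits.
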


\begin{proof}
When $a=q^{-n}$ or $a=q^n$, the left-hand side of  \eqref{eq:wei-n}
equals
\begin{align}
\sum_{k=0}^{n-1}[6k+1]\frac{(q^{1-n},q^{1+n};q^2)_k(q^2,b^3q,q/b^3;q^4)_k}{(q^4,q^{4+n},q^{4-n};q^4)_k(q^2/b^3,b^3q^2;q^2)_k}q^{2k}.
\label{eq:wei-o}
\end{align}
Via \eqref{GR}, the series \eqref{eq:wei-o} can be expressed as
\begin{align*}
\frac{(q^3,b^3,1/b^3;q^4)_{(n+1)/4}}{(1/q,q^2/b^3,b^3q^2;q^4)_{(n+1)/4}}.
\end{align*}
Since $(1-aq^n)$ and $(a-q^n)$ are pairwise relatively prime
polynomials, there holds the $q$-congruence: modulo
$(1-aq^n)(a-q^n)$,
\begin{align}
\sum_{k=0}^{n-1}[6k+1]\frac{(aq,q/a;q^2)_k(q^2,b^3q,q/b^3;q^4)_k}{(q^4,q^4/a,aq^4;q^4)_k(q^2/b^3,b^3q^2;q^2)_k}q^{2k}
\equiv
 \frac{(q^3,b^3,1/b^3;q^4)_{(n+1)/4}}{(1/q,q^2/b^3,b^3q^2;q^4)_{(n+1)/4}}.
\label{eq:wei-p}
\end{align}

When $b=q^{-n}$ or $b=q^n$, the left-hand side of  \eqref{eq:wei-n}
is equal to
\begin{align}
\sum_{k=0}^{n-1}[6k+1]\frac{(aq,q/a;q^2)_k(q^2,q^{1-3n},q^{1+3n};q^4)_k}{(q^4,q^4/a,aq^4;q^4)_k(q^{2+3n},q^{2-3n};q^2)_k}q^{2k}
\label{eq:wei-q}
\end{align}
Through \eqref{GR}, the series \eqref{eq:wei-q} can be modified as
\begin{align*}
\frac{(q^5,aq^2,q^2/a;q^4)_{(3n-1)/4}}{(q,q^4/a,aq^4;q^4)_{(3n-1)/4}}.
\end{align*}
Therefore, there is the result: modulo $(1-bq^n)(b-q^n)$,
\begin{align}
\sum_{k=0}^{n-1}[6k+1]\frac{(aq,q/a;q^2)_k(q^2,b^3q,q/b^3;q^4)_k}{(q^4,q^4/a,aq^4;q^4)_k(q^2/b^3,b^3q^2;q^2)_k}q^{2k}
\equiv
\frac{(q^5,aq^2,q^2/a;q^4)_{(3n-1)/4}}{(q,q^4/a,aq^4;q^4)_{(3n-1)/4}}.
\label{eq:wei-r}
\end{align}

It is clear that the polynomials $(1-aq^n)(a-q^n)$,
$(1-aq^n)(a-q^n)$, and $[n]$ are prime to each other. Due to the
relations
\begin{align}
&\frac{(1-bq^n)(b-q^n)(-1-a^2+aq^n)}{(a-b)(1-ab)}\equiv1\pmod{(1-aq^n)(a-q^n)},
\label{Chinese-c}
\\[5pt]
&\frac{(1-aq^n)(a-q^n)(-1-b^2+bq^n)}{(b-a)(1-ba)}\equiv1\pmod{(1-bq^n)(b-q^n)},
\label{Chinese-d}
\end{align}
and employing the Chinese remainder theorem for coprime polynomials,
we can establish Theorem \ref{thm-g} in accordance with Lemma
\ref{lemm-b}, \eqref{eq:wei-p}, and \eqref{eq:wei-r}.
\end{proof}

\begin{proof}[Proof of Theorem \ref{thm-c}]
Letting $b\to1$ in Theorem \ref{thm-g}, we discover the formula:
modulo $[n]\Phi_n(q)^2(1-aq^n)(a-q^n)$,
\begin{align}
&\sum_{k=0}^{n-1}[6k+1]\frac{(aq,q/a;q^2)_k(q;q^4)_k^2(q^2;q^4)_k}{(q^4/a,aq^4;q^4)_k(q^2;q^2)_k^3(q^4;q^4)_k}q^{2k}
\notag\\[3pt]
 &\quad\equiv
[3n]\frac{(1-aq^n)(a-q^n)(-2+q^n)}{(1-a)^2}\frac{(aq^2,q^2/a;q^4)_{(3n-1)/4}}{(q^4/a,aq^4;q^4)_{(3n-1)/4}}.
\label{eq:wei-s}
\end{align}
It is easy to see the relation: modulo $\Phi_n(q)^2$,
\begin{align}
&\frac{(aq^2,q^2/a;q^4)_{(3n-1)/4}}{(q^4/a,aq^4;q^4)_{(3n-1)/4}}
\notag\\[3pt]
&=q^{2n}(1-a)(1-1/a)\frac{(aq^2,q^2/a;q^4)_{(n-1)/2}(aq^{2n+4},q^{2n+4}/a;q^4)_{(n-3)/4}}{(q^4/a,aq^4;q^4)_{(3n-1)/4}}.
\label{eq:wei-t}
\end{align}
The combination \eqref{eq:wei-s} and \eqref{eq:wei-t} produces the
$q$-supercnogruence: modulo $[n]\Phi_n(q)^2(1-aq^n)(a-q^n)$,
\begin{align}
&\sum_{k=0}^{n-1}[6k+1]\frac{(aq,q/a;q^2)_k(q;q^4)_k^2(q^2;q^4)_k}{(q^4/a,aq^4;q^4)_k(q^2;q^2)_k^3(q^4;q^4)_k}q^{2k}
\notag\\[3mm]
 &\quad\equiv
[3n](1-aq^n)(a-q^n)(2-q^n)(q^{2n}/a)
\notag\\[3mm]
&\qquad\times\frac{(aq^2,q^2/a;q^4)_{(n-1)/2}(aq^{2n+4},q^{2n+4}/a;q^4)_{(n-3)/4}}{(q^4/a,aq^4;q^4)_{(3n-1)/4}}.
\label{eq:wei-u}
\end{align}
Letting $a\to1$ in \eqref{eq:wei-u}, we catch hold of Theorem
\ref{thm-c}.
\end{proof}

Next, we shall offer a parametric extension of Theorem \ref{thm-d}.

\begin{thm}\label{thm-h}
Let $n$ be a positive integer subject to $n\equiv 1\pmod 4$.
Then, modulo modulo $[n](1-aq^n)(a-q^n)(1-bq^n)(b-q^n)$,
\begin{align}
&\sum_{k=0}^{M}[6k+1]\frac{(aq,q/a;q^2)_k(q^2,bq,q/b;q^4)_k}{(q^4,q^4/a,aq^4;q^4)_k(q^2/b,bq^2;q^2)_k}q^{2k}
\notag\\[3pt]
 &\quad\equiv\frac{(1-aq^n)(a-q^n)(-1-b^2+bq^n)}{(b-a)(1-ba)}\frac{(q^5,aq^2,q^2/a;q^4)_{(n-1)/4}}{(q,q^4/a,aq^4;q^4)_{(n-1)/4}}
\notag\\[3pt]
&\qquad+\frac{(1-bq^n)(b-q^n)(-1-a^2+aq^n)}{(a-b)(1-ab)}\frac{(q^5,bq^2,q^2/b;q^4)_{(n-1)/4}}{(q,q^4/b,bq^4;q^4)_{(n-1)/4}},
\label{eq:wei-v}
\end{align}
where $M=(n-1)/2$ or $n-1$.
 \end{thm}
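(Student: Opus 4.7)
The plan is to follow verbatim the architecture of the proof of Theorem \ref{thm-g}, merely swapping the $n \equiv 3 \pmod 4$ arithmetic for its $n \equiv 1 \pmod 4$ counterpart. Three ingredients will be combined: Gasper and Rahman's summation \eqref{GR}, Lemma \ref{lemm-b} (which supplies divisibility by $[n]$), and the Chinese remainder theorem for coprime polynomials together with the auxiliary relations \eqref{Chinese-c} and \eqref{Chinese-d}.

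First I would verify the two one-parameter specializations. Setting $a=q^{-n}$ or $a=q^n$, the factor $(aq;q^2)_k$ or $(q/a;q^2)_k$ in the summand kills every term with $k>(n-1)/2$, so the left-hand side of \eqref{eq:wei-v} collapses to the finite sum
\[
\sum_{k=0}^{(n-1)/2}[6k+1]\frac{(q^{1+n},q^{1-n};q^2)_k(q^2,bq,q/b;q^4)_k}{(q^4,q^{4+n},q^{4-n};q^4)_k(q^2/b,bq^2;q^2)_k}q^{2k}.
\]
Specializing \eqref{GR} via $q \to q^2$, $a \to q$, and a choice of the remaining parameters that kills the corrective ${}_3\phi_2$ tail, I expect this to evaluate to $(q^5,bq^2,q^2/b;q^4)_{(n-1)/4}/(q,q^4/b,bq^4;q^4)_{(n-1)/4}$, yielding the required identity modulo $(1-aq^n)(a-q^n)$. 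In parallel, taking $b=q^{-n}$ or $b=q^n$ collapses the $(bq,q/b;q^4)_k$ pair and terminates the sum already at $k=(n-1)/4$; another choice of parameters in \eqref{GR} should deliver $(q^5,aq^2,q^2/a;q^4)_{(n-1)/4}/(q,q^4/a,aq^4;q^4)_{(n-1)/4}$ modulo $(1-bq^n)(b-q^n)$. In both specializations, the closed form is insensitive to whether $M=(n-1)/2$ or $M=n-1$, since the extra summands vanish term by term at the specialized parameters.

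Once these two specialization identities are in hand, I would invoke the Chinese remainder theorem: the three polynomials $[n]$, $(1-aq^n)(a-q^n)$, and $(1-bq^n)(b-q^n)$ are pairwise coprime, and the multipliers appearing in \eqref{eq:wei-v} are precisely those produced by \eqref{Chinese-c} and \eqref{Chinese-d}. Divisibility by $[n]$ is supplied directly by Lemma \ref{lemm-b}, completing the assembly. The principal obstacle I anticipate is the accurate bookkeeping in selecting the two distinct parameter assignments in \eqref{GR} so that (a) the resulting series on the left-hand side matches the summand of Theorem \ref{thm-h}, (b) the ${}_3\phi_2$ correction either telescopes or vanishes through an appropriate zero factor, and (c) the resulting right-hand side produces exactly the $(n-1)/4$-truncated products appearing in \eqref{eq:wei-v}. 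This is the same delicate step that drives the proof of Theorem \ref{thm-g}; once it is carried out, the CRT assembly is purely formal.
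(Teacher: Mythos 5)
Your plan is correct and coincides with the paper's own proof: the paper likewise evaluates the $a=q^{\pm n}$ specialization and the $b=q^{\pm n}$ specialization through \eqref{GR} (taking $q\to q^2$, $a\to q$, $b\to q^{1-n}$, $d\to bq$, $f\to q^2$ for the former and $q\to q^2$, $a\to q$, $b\to aq$, $d\to q^{1-n}$, $f\to q^2$ for the latter, the prefactor of the ${}_3\phi_{2}$ tail vanishing in both cases), obtaining exactly the two closed forms you predicted, and then assembles \eqref{eq:wei-v} via the Chinese remainder theorem with the multipliers \eqref{Chinese-c}--\eqref{Chinese-d} and the $[n]$-divisibility of Lemma \ref{lemm-b}. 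The only detail you left open, the explicit parameter choices in \eqref{GR}, is precisely the substitution data above, so the proposal is sound as stated.
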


\begin{proof}
The $q\to q^2$, $a\to q$, $b\to q^{1-n}$, $d\to bq$, $f\to q^2$ case
of \eqref{GR} reads
\begin{align*}
\sum_{k=0}^{M}[6k+1]\frac{(q^{1-n},q^{1+n};q^2)_k(q^2,bq,q/b;q^4)_k}{(q^4,q^{4+n},q^{4-n};q^4)_k(q^2/b,bq^2;q^2)_k}q^{2k}
=\frac{(q^5,bq^2,q^2/b;q^4)_{(n-1)/4}}{(q,q^4/b,bq^4;q^4)_{(n-1)/4}}.
\end{align*}
So we arrive at conclusion: modulo $(1-aq^n)(a-q^n)$,
\begin{align}
\sum_{k=0}^{M}[6k+1]\frac{(aq,q/a;q^2)_k(q^2,bq,q/b;q^4)_k}{(q^4,q^4/a,aq^4;q^4)_k(q^2/b,bq^2;q^2)_k}q^{2k}
\equiv
 \frac{(q^5,bq^2,q^2/b;q^4)_{(n-1)/4}}{(q,q^4/b,bq^4;q^4)_{(n-1)/4}}.
\label{eq:wei-w}
\end{align}

The $q\to q^2$, $a\to q$, $b\to aq$, $d\to q^{1-n}$, $f\to q^{2}$
case of \eqref{GR} is
\begin{align*}
\sum_{k=0}^{M}[6k+1]\frac{(q^{1-n},q^{1+n};q^2)_k(q^2,bq,q/b;q^4)_k}{(q^4,q^{4+n},q^{4-n};q^4)_k(q^2/b,bq^2;q^2)_k}q^{2k}
=\frac{(q^5,aq^2,q^2/a;q^4)_{(n-1)/4}}{(q,q^4/a,aq^4;q^4)_{(n-1)/4}}.
\end{align*}
 Thus we are led to the result: modulo
$(1-bq^n)(b-q^n)$,
\begin{align}
\sum_{k=0}^{M}[6k+1]\frac{(aq,q/a;q^2)_k(q^2,bq,q/b;q^4)_k}{(q^4,q^4/a,aq^4;q^4)_k(q^2/b,bq^2;q^2)_k}q^{2k}
\equiv
\frac{(q^5,aq^2,q^2/a;q^4)_{(n-1)/4}}{(q,q^4/a,aq^4;q^4)_{(n-1)/4}}.
\label{eq:wei-x}
\end{align}
Using \eqref{Chinese-c}, \eqref{Chinese-d}, and the Chinese
remainder theorem for coprime polynomials, we can derive Theorem
\ref{thm-f} from Lemma \ref{lemm-b}, \eqref{eq:wei-w}, and
\eqref{eq:wei-x}.
\end{proof}

\begin{proof}[Proof of Theorem \ref{thm-d}]
Letting $b\to1$ in Theorem \ref{thm-h}, we obtain the formula:
modulo $[n]\Phi_n(q)^2(1-aq^n)(a-q^n)$,
\begin{align}
&\sum_{k=0}^{M}[6k+1]\frac{(aq,q/a;q^2)_k(q;q^4)_k^2(q^2;q^4)_k}{(q^4/a,aq^4;q^4)_k(q^2;q^2)_k^2(q^4;q^4)_k}q^{2k}
\notag\\[3mm]
 &\quad\equiv[n]\frac{(1-aq^n)(a-q^n)(-2+q^n)}{(1-a)^2}\frac{(aq^2,q^2/a;q^4)_{(n-1)/4}}{(q^4/a,aq^4;q^4)_{(n-1)/4}}
\notag
\end{align}
\begin{align}
&\qquad-[n]\frac{(1-q^n)^2(-1-a^2+aq^n)}{(1-a)^2}\frac{(q^2;q^4)_{(n-1)/4}^2}{(q^4;q^4)_{(n-1)/4}^2}
\notag\\[3mm]
 &\quad=[n](1-q^n)^2\frac{(q^2;q^4)_{(n-1)/4}^2}{(q^4;q^4)_{(n-1)/4}^2}+[n](2-q^n)\Omega(a,q,n),
\label{eq:wei-y}
\end{align}
where
\begin{align*}
\Omega(a,q,n)=
 \frac{a(1-q^n)^2}{(1-a)^2}\frac{(q^2;q^4)_{(n-1)/4}^2}{(q^4;q^4)_{(n-1)/4}^2}
-\frac{(1-aq^n)(a-q^n)}{(1-a)^2}\frac{(aq^2,q^2/a;q^4)_{(n-1)/4}}{(q^4/a,aq^4;q^4)_{(n-1)/4}}.
\end{align*}
 By the L'H\^{o}spital rule, it is ordinary to calculate the limit
\begin{align*}
&\lim_{a\to1}\Omega(a,q,n)\\[5pt]
&\quad=\frac{(q^2;q^4)_{(n-1)/4}^2}{(q^4;q^4)_{(n-1)/4}^2}
\bigg\{q^n+[n]^2\sum_{j=1}^{(n-1)/2}\frac{(-1)^jq^{2j}}{[2j]^2}\bigg\}.
\end{align*}
Letting $a\to1$ in \eqref{eq:wei-y} and utilizing the upper limit,
we get hold of Theorem \ref{thm-d}.
\end{proof}


\end{document}